\newtheorem{thm}{Theorem}[section]
\newtheorem{lem}[thm]{Lemma}
\theoremstyle{definition}
\theoremstyle{rem}
\newtheorem{rem}[thm]{Remark}
\title{\textbf{One-dimensional diffusion processes \\with moving membrane: partial \\reflection in combination with \\jump-like exit of process from \\membrane}}
\author{Bohdan~Kopytko\footnote{Czestochowa University of Technology, Poland. E-mail: bohdan.kopytko@im.pcz.pl}$\quad\quad$Roman~Shevchuk\footnote{Lviv Polytechnic National University,
    Ukraine. E-mail: r.v.shevchuk@gmail.com}
}
\date{}
\begin{document}
\maketitle
\abstract{We use analytical methods to construct the two-parameter Feller semigroup associated with a Markov process on a line with a moving membrane such that at the points on both sides of the membrane it coincides with the ordinary diffusion processes given there and its behavior after reaching the membrane is described by a kind of nonlocal Feller-Wentzell conjugation condition.\\\\\textbf{Keywords:} parabolic potential; Wentzell boundary condition; Feller semigroup; method of successive approximations.\\\textbf{AMS MSC 2010:} 60J60; 35K20.}\\\\

In this paper, we construct the two-parameter Feller semigroup associated with a certain one-dimensional inhomogeneous Markov process and study its properties. We are interested in the process on the real line which can be described as follows. At the interior points of the half-lines separated by a point, the position of which depends on the time variable, this process coincides with the ordinary diffusion processes given there and its behavior on the common boundary of these half-lines is determined by a kind of the general Feller-Wentzell conjugation condition (see \cite{feller,langer,wentzell1}). This condition is nonlocal and contains only the terms that correspond to the property of partial reflection of the process and the possibility of its exit from the boundary of the domain by jumps. Such problems are often called problems of pasting together two diffusion processes on a line or problems of constructing the mathematical model of diffusion in medium with a membrane (see \cite{kopytko1,portenko2}). According to our understanding, the concept of a moving membrane means that its position on the real line changes and is determined by a given function which, as well as the process itself, depends on the time variable. The study of the problem is done using analytical methods. With such an approach, the problem of existence of the desired semigroup leads to the corresponding nonlocal conjugation problem for a second order linear parabolic equation of Kolmogorov's type to which the above problem is reduced. The peculiarity of this problem is that the half-bounded domains on the plane, where the equations are considered, are curvilinear. Furthermore, the function of the time variable which determines the common boundary of these domains, where two conjugation conditions are given (one of them represents the Feller property of the process and the other one corresponds to the nonlocal Feller-Wentzell conjugation condition for one-dimensional diffusion processes), satisfies only the H\"{o}lder condition with exponent greater than $\frac{1}{2}$. The nonclassical parabolic conjugation problem, formulated in such a way, is studied presumably for the first time. Its classical solvability is obtained by the boundary integral equations method with the use of the fundamental solutions of the Kolmogorov backward equations and the associated parabolic potentials (see \cite{friedman1,iljin,kopytko1,ladyzhenskaya,pogorzelski1} and \cite{baderko2}, where the review of the publications devoted to the application of this method to the investigation of initial-boundary value problems for parabolic equations is presented). Note that the main part of the proof of the existence theorem for this problem consists in justifying the possibility of applying the ordinary method of successive approximations (cf. \cite{kamynin1,kamynin2}) to the system of singular Volterra integral equations of the second kind which appears here, and the results presented in this part of our investigations are generalizations of the results obtained earlier in \cite{kopytko2,kopytko3} for the case of stationary membranes (see also \cite{shevchuk} where another kind of the general Feller-Wentzell boundary condition is considered). In the paper, in addition, some other properties of the constructed process are established for the first time. In particular, using the integral representation of the constructed semigroup, we succeed in finding its weak infinitesimal generator (cf. \cite{bogdan,xie}). It is shown that the class of Markov processes, extracted in the paper, covers as a partial case the generalized diffusion in understanding of the definition formulated in \cite{portenko1,portenko2}. Let us also pay attention to possible applications of generalized diffusion processes, as well as the methodology of their construction developed by us. E.g., in the introductory part of \cite{mastrangelo} it is noted that the diffusion processes admitting the generalized drift vector arise, in particular, in modeling physical processes which occur in the core of a nuclear reactor and in \cite{petruk} the boundary integral equations method is used in the study of a problem in high-energy astrophysics for constructing the classical solution of a particular case of non-stationary kinetic equation that describes the acceleration of charged particles in a vicinity of strong shock waves.

Finally, we note that there are many publications devoted to construction of diffusion processes (including multidimensional ones) with Wentzell boundary conditions using other approaches. Among them we note \cite{feller,langer,skubachevskii,taira,wentzell1,wentzell2} where there are presented results of application of the analytical approach to description of the mentioned class of homogeneous Markov processes based on methods of the semigroup theory and functional analysis in relation to the elliptic boundary value problems, and \cite{anulova,engelbert,etore,ikeda,kulik,kulinich,pilipenko,skorokhod,walsh,zaitseva} which partially reflect the development of methods of stochastic analysis for the construction of such type of processes (see also the references given there). We also mention the paper \cite{lejay} where the problem of pasting together one-dimensional diffusions is formulated in a slightly different form (compared to the one we use) and with somewhat different (in comparison with those considered in this paper) boundary conditions.

Our paper is organized as follows.

In Section 1, we formulate the problem of pasting together two diffusion processes in terms of the parabolic problem of conjugation and provide a review of auxiliary results on the fundamental solution of the backward Kolmogorov equation and the associated potentials which will be used in the subsequent sections.

Section 2 is devoted to the proof of the existence and uniqueness theorems for the conjugation problem formulated in Section 1.

In Section 3, using the solution of this parabolic conjugation problem, we define the two-parameter Feller semigroup which describes the desired Markov process. Some additional properties of the constructed process are also investigated here.

\section{Setting of the nonlocal parabolic conjugation problem, main assumptions and auxiliary results}

Consider the strip
\[
S_t=\{(s,x):\quad 0\leq s<t\leq T,\, -\infty<x<\infty\}
\]
in the plane \((s,x)\in\mathbb{R}^2\). Assume that \(S_T\) contains a continuous curve \(x=h(s),\, 0\leq s\leq T,\) which separates \(S_t\) into two domains

\[
S_t^{(1)}=\{(s,x):\quad 0\leq s<t\leq T,\, -\infty<x<h(s)\}
\]
and
\[
S_t^{(2)}=\{(s,x):\quad 0\leq s<t\leq T,\, h(s)<x<\infty\}.
\]

Put \(D_{1s}=(-\infty,h(s)),\,D_{2s}=(h(s),\infty),\,D_s=D_{1s}\cup D_{2s},\,D_{is}^{\delta}=\{y:\, y\in D_{is},\,|y-h(s)|<\delta\},\,i=1,2,\,D_s^{\delta}=D_{1s}^{\delta}\cup D_{2s}^{\delta}\) where \(\delta\) is any positive number. Denote by \(\overline{G}\) the closure of a set \(G\).

In the strip \(S_T\), we consider two uniformly parabolic operators with the bounded continuous coefficients
\begin{equation}\label{eq:ParabolicOp}
\frac{\partial}{\partial s}+L_s^{(i)}\equiv\frac{\partial}{\partial s}+\frac{1}{2}b_i(s,x)\frac{\partial^2}{\partial x^2}+a_i(s,x)\frac{\partial}{\partial x},\quad i=1,2.
\end{equation}

The problem is to find a classical solution \(u(s,x,t)\) of the equation
\begin{equation}\label{eq:ParabolicEq}
\frac{\partial u}{\partial s}+L_s^{(i)}u=0,\quad (s,x)\in S_t^{(i)},\text{ }i=1,2,
\end{equation}
which satisfies the "initial" condition
\begin{equation}\label{eq:InitialCond}
\lim_{s\uparrow t}u(s,x,t)=\varphi(x),\quad x\in\mathbb{R},
\end{equation}
and two conjugation conditions
\begin{equation}\label{eq:FellerProperty}
B_1u\equiv u(s,h(s)-0,t)-u(s,h(s)+0,t)=0,\quad 0\leq s\leq t\leq T,
\end{equation}
\begin{equation}\label{eq:FellerWentzell}\begin{split}
B_2u&\equiv q_1(s)\frac{\partial u}{\partial x}(s,h(s)-0,t)-q_2(s)\frac{\partial u}{\partial x}(s,h(s)+0,t)+\\
&+\int\limits_{D_s}\left[u(s,h(s),t)-u(s,y,t)\right]\mu(s,dy)=0,\quad 0\leq s<t\leq T,
\end{split}\end{equation}
where \(\varphi\) is a function which is bounded and continuous on \(\mathbb{R}\), \(q_i,\,i=1,2,\) are nonnegative continuous on \([0,T]\) functions such that
\begin{equation}\label{ineq:ReflectionCoef}
q_1(s)+q_2(s)>0,\quad s\in[0,T],
\end{equation}
and \(\mu(s,\cdot)\) is a nonnegative Borel measure on \(D_s\) such that for any \(\delta>0,\)
\begin{equation}\label{ineq:InfiniteMeasure}
\int\limits_{D_s^{\delta}}|y-h(s)|\mu(s,dy)+\mu(s,D_s\setminus D_s^{\delta})<\infty,\quad s\in[0,T].
\end{equation}
Here, \(u(s,h(s)-0,t)\,\left(\frac{\partial u}{\partial x}(s,h(s)-0,t)\right)\) and \(u(s,h(s)+0,t)\,\left(\frac{\partial u}{\partial x}(s,h(s)+0,t)\right)\) denote the limits of the function \(u(s,x,t)\,\left(\frac{\partial u}{\partial x}(s,x,t)\right)\) at \((s,h(s))\) as the point \((s,x)\) tends to \((s,h(s))\) from the side of the domains \(S_t^{(1)}\) and \(S_t^{(2)}\) respectively.

In view of the above setting of the problem \eqref{eq:ParabolicEq}-\eqref{eq:FellerWentzell}, it is convenient to write the function \(u\) in the form
\[u(s,x,t)=\begin{cases}u_1(s,x,t)\quad\text{if}\quad(s,x)\in \overline{S}_t^{(1)},\\
u_2(s,x,t)\quad\text{if}\quad(s,x)\in\overline{S}_t^{(2)}.\end{cases}\]
Thus, to solve the problem \eqref{eq:ParabolicEq}-\eqref{eq:FellerWentzell}, we can find a pair of functions \(u_i(s,x,t)\,(i=1,2),\) continuous and bounded in the closed domain \(\overline{S}_t^{(i)},\) which satisfy the equation \eqref{eq:ParabolicEq} in the open domain \(S_t^{(i)}\) (in the classical meaning), the "initial" condition \eqref{eq:InitialCond} for \(x\in \overline{D}_{it},\) and, for \(x=h(s),\) they are interrelated by two conjugation conditions \eqref{eq:FellerProperty} and \eqref{eq:FellerWentzell}, where \(B_1u\) and \(B_2u\) can be replaced by the following expressions:
\begin{align}
B_1u&\equiv u_1(s,h(s),t)-u_2(s,h(s),t),\nonumber\\
B_2u&\equiv q_1(s)\frac{\partial u_1}{\partial x}(s,h(s),t)-q_2(s)\frac{\partial u_2}{\partial x}(s,h(s),t)+\nonumber\\
&+\sum\limits_{i=1}^2\int\limits_{D_{is}}\left[u_i(s,h(s),t)-u_i(s,y,t)\right]\mu(s,dy).\nonumber
\end{align}

Again, we note that the problem \eqref{eq:ParabolicEq}-\eqref{eq:FellerWentzell} appears, in particular, in the theory of diffusion processes when constructing the one-dimensional model of diffusion with membrane or, what is the same, when solving (using analytical methods) the so-called problem of pasting together two diffusion processes on a line. In the case under consideration, the membrane is assumed to be moving and it is placed at the point \(x=h(s)\), i.e., at the point of pasting together two given diffusion processes. In the paper we prove that there exists a unique solution of \eqref{eq:ParabolicEq}-\eqref{eq:FellerWentzell} and that the family of operators \(T_{st}\varphi(x)\equiv u(s,x,t)\) is the two-parameter Feller semigroup associated with an inhomogeneous Markov process on the line. Then the fulfillment of the equation \eqref{eq:ParabolicEq} for the function \(T_{st}\varphi(x)\) means that this process coincides in \(D_{is}\) with the diffusion process given there by generating operator $L_s^{(i)},\,i=1,2,$ and the "initial" condition \eqref{eq:InitialCond} is in agreement with the equality \(T_{ss}=I,\) where \(I\) is the identity operator. Next, the conjugation condition \eqref{eq:FellerProperty} reflects the Feller property of the process and the equality \eqref{eq:FellerWentzell} represents one of kinds of the general Feller-Wentzell conjugation condition (see \cite{kopytko2,langer,wentzell1}) which includes only the terms corresponding to partial reflection of the process at the point where the membrane is placed and to the possibility of exit of the process from this point by jumps. Recall that the most general Feller-Wentzell boundary condition contains two more terms: the unknown function and its derivative with respect to the time variable that correspond to such properties of the process on the common boundary of the domains as its termination and delay.

We need the following conditions:
\begin{enumerate}
\item[I.] The equation \eqref{eq:ParabolicEq} is the equation of parabolic type in \(\overline{S}_T\), i.e., there exist positive constants \(b\) and \(B\) such that \(0<b\leq b_i(s,x)\leq B<\infty,\,i=1,2,\,(s,x)\in \overline{S}_T.\)
\item[II.] The coefficients \(a_i(s,x)\) and \(b_i(s,x),\,i=1,2,\) are continuous in \(\overline{S}_T\) and belong to the H\"{o}lder class \(H^{\frac{\alpha}{2},\alpha}(\overline{S}_T),\,0<\alpha<1\) (for the definition of H\"{o}lder classes, see \cite[Ch.I, \S1]{ladyzhenskaya}).
\item[III.] The initial function \(\varphi\) in \eqref{eq:InitialCond} belongs to the space of bounded and continuous functions, that will be denoted by \(C_b(\mathbb{R})\). The norm in this space is defined by \(\|\varphi\|=\sup\limits_{x\in\mathbb{R}}|\varphi(x)|.\)
\item[IV.] The functions \(q_i(s),\,i=1,2,\,s\in[0,T]\) in \eqref{eq:FellerWentzell} are nonnegative, continuous and satisfy the inequality \eqref{ineq:ReflectionCoef}; the measure \(\mu(s,\cdot)\) is nonnegative, satisfies the inequality \eqref{ineq:InfiniteMeasure} and, for any function \(f\in C_b(\mathbb{R})\) and any number \(\delta>0,\) the integrals
    \[F_f^{(i)}(s)=\int\limits_{D_{is}^{\delta}}(y-h(s))f(y)\mu(s,dy),\quad G_f^{(i)}(s)=\int\limits_{D_{is}\setminus D_{is}^{\delta}}f(y)\mu(s,dy),\quad i=1,2,\] are continuous on \([0,T]\) as functions of \(s\).
\item[V.] The curve \(h(s)\) is continuous and belongs to the H\"{o}lder class \(H^{\frac{1+\alpha}{2}}([0,T])\).
\end{enumerate}

The conditions I, II ensure the existence of the fundamental solution for each equation in \eqref{eq:ParabolicEq} (see \cite[Ch.I, \S6]{friedman1}, \cite[Ch.IV, \S11]{ladyzhenskaya}, \cite[Ch.II, \S2]{portenko2}), i.e., the existence of the function \(G_i(s,x,t,y)\,(i=1,2)\) defined for all \(0\leq s<t\leq T\) and \(x,y\in\mathbb{R}\), which has the following properties:
\begin{itemize}
\item For fixed \(t\in[0,T)\) and \(y\in\mathbb{R}\) the function \(G_i\) satisfies the equation \eqref{eq:ParabolicEq} as a function of \((s,x)\in[0,t)\times\mathbb{R}\).
\item For any bounded continuous function \(\varphi(x)\) in \(\mathbb{R}\),
\[\lim\limits_{s\uparrow t}\int\limits_{\mathbb{R}}G_i(s,x,t,y)\varphi(y)dy=\varphi(x)\]
if \(t\in(0,T],\,x\in\mathbb{R}.\)
\end{itemize}
Furthermore, the function \(G_i(s,x,t,y)\) allows the representation (see \cite[Ch.I, \S2]{friedman1}, \cite[Ch.IV, \S11]{ladyzhenskaya}, \cite[Ch.II, \S2]{portenko2})
\begin{equation}\label{eq:FS}
G_i(s,x,t,y)=Z_{i0}(s,x,t,y)+Z_{i1}(s,x,t,y),\quad i=1,2,
\end{equation}
where
\begin{align}
&Z_{i0}(s,x,t,y)=[2\pi b_i(t,y)(t-s)]^{-\frac{1}{2}}\exp\left\{-\frac{(y-x)^2}{2b_i(t,y)(t-s)}\right\},\label{eq:FS1}\\
&Z_{i1}(s,x,t,y)=\int\limits_s^t d\tau\int\limits_{\mathbb{R}}Z_{i0}(s,x,\tau,z)Q_i(\tau,z,t,y)dz,\label{eq:FS2}
\end{align}
and the function \(Q_i(s,x,t,y)\) is the solution of some singular Volterra integral equation of the second kind.

Note that (see \cite[Ch.IV, \S13]{ladyzhenskaya})
\begin{align}
&|D_s^rD_x^pG_i(s,x,t,y)|\leq C(t-s)^{-\frac{1+2r+p}{2}}\exp\left\{-c\frac{(y-x)^2}{t-s}\right\},\label{ineq:FSEstimate1}\\
&|D_s^rD_x^pZ_{i1}(s,x,t,y)|\leq C(t-s)^{-\frac{1+2r+p-\alpha}{2}}\exp\left\{-c\frac{(y-x)^2}{t-s}\right\},\label{ineq:FSEstimate2}
\end{align}
where \(i=1,2,\,0\leq s<t\leq T,\,x,y,\in\mathbb{R},\,C\) and \(c\) are positive constants (in what follows, various positive constants depending on the data of the problem \eqref{eq:ParabolicEq}-\eqref{eq:FellerWentzell} will be denoted by \(C\) or \(c\) without specifying their values); \(r\) and \(p\) are the nonnegative integers for which \(2r+p\leq 2,\, D_s^r\) is the partial derivative with respect to \(s\) of order \(r,\,D_x^p\) is the partial derivative with respect to \(x\) of order \(p\).
\begin{rem}[The estimate for \(Z_{i0}\)]
The inequality \eqref{ineq:FSEstimate1} holds for any nonnegative integers \(r\) and \(p\), when \(G_i\) is replaced by \(Z_{i0}\).
\end{rem}

Note also that (see \cite[Ch.II, \S2]{portenko1}, \cite[Ch.II, \S2]{portenko2})
\begin{align}
&\label{eq:FSadditional1}\int\limits_{\mathbb{R}}G_i(s,x,t,y)dy=1,\quad i=1,2,\\
&\label{eq:FSadditional2}\int\limits_{\mathbb{R}}G_i(s,x,t,y)(y-x)dy=\int\limits_s^td\tau\int\limits_{\mathbb{R}}G_i(s,x,\tau,z)a_i(\tau,z)dz,\quad i=1,2,\\
\label{eq:FSadditional3}\begin{split}&\int\limits_{\mathbb{R}}G_i(s,x,t,y)(y-x)^2dy=\int\limits_s^td\tau\int\limits_{\mathbb{R}}G_i(s,x,\tau,z)b_i(\tau,z)dz+\\
&+2\int\limits_s^t d\tau\int\limits_{\mathbb{R}}G_i(s,x,\tau,z)a_i(\tau,z)(z-x)dz,\quad i=1,2.\end{split}
\end{align}

Given the fundamental solution \(G_i(s,x,t,y),\,i=1,2,\) and the functions \(\varphi(x),\,x\in\mathbb{R},\,h(s),\,s\in[0,T],\) consider the integrals:
\begin{align}
&u_{i0}(s,x,t)=\int\limits_{\mathbb{R}}G_i(s,x,t,y)\varphi(y)dy,\quad i=1,2,\label{eq:Poisson}\\
&u_{i1}(s,x,t)=\int\limits_s^tG_i(s,x,\tau,h(\tau))V_i(\tau,t)d\tau,\quad i=1,2\label{eq:Simple-layer}
\end{align}
(\(V_i(s,t),\,i=1,2,\) is a function defined for \(0\leq s<t\leq T\)). In the theory of parabolic equations, the functions \(u_{i0}(s,x,t)\) and \(u_{i1}(s,x,t)\) are called the Poisson potential and the parabolic simple-layer potential, respectively.

Let us note some properties of the functions \(u_{i0}\) and \(u_{i1},\,i=1,2\). Recall that \(\varphi\in C_b(\mathbb{R})\). From the definition of the fundamental solution \(G_i,\,i=1,2,\) and the estimates \eqref{ineq:FSEstimate1} and \eqref{ineq:FSEstimate2}, it follows that the potential \(u_{i0}(s,x,t)\) exists and, as a function of \((s,x),\) for fixed \(t\in(0,T]\) satisfies the equation \eqref{eq:ParabolicEq} in \((s,x)\in[0,t)\times\mathbb{R}\) with the "initial" condition
\begin{equation}\label{eq:PoissonInitialCond}
\lim\limits_{s\uparrow t}u_{i0}(s,x,t)=\varphi(x),\quad x\in\mathbb{R},\, i=1,2.
\end{equation}
Furthermore, the function \(u_{i0}(s,x,t),\,i=1,2,\) satisfies the estimate
\begin{equation}\label{ineq:PoissonEstimate}
|D_s^rD_x^p u_{i0}(s,x,t)|\leq C\|\varphi\|(t-s)^{-\frac{2r+p}{2}},
\end{equation}
where \(0\leq s<t\leq T,\,x\in\mathbb{R},\,r\) and \(p\) are the nonnegative integers such that \(2r+p\leq 2\).

Consider the integral \eqref{eq:Simple-layer}. If we suppose that the density \(V_i(\tau,t)\) is continuous in \(0\leq\tau<t\leq T\) and allows the inequality
\begin{equation}\label{ineq:SingularityV}
|V_i(\tau,t)|\leq C(t-\tau)^{-\mu},\quad 0\leq\mu<1,\,i=1,2,
\end{equation}
in this domain, then the function \(u_{i1}(s,x,t),\,i=1,2,\) is continuous in \(0\leq s<t\leq T,\,x\in\mathbb{R}\) and satisfies the equation \eqref{eq:ParabolicEq} in the domain \((s,x)\in[0,t)\times(\mathbb{R}\setminus h(s))\) and the initial condition
\begin{equation}\label{eq:ui1Initial}
\lim\limits_{s\uparrow t}u_{i1}(s,x,t)=0,\quad i=1,2,
\end{equation}
at every point \(x\in\mathbb{R}\setminus{h(t)}\). Furthermore, if \(0\leq\mu<\frac{1}{2}\), then this function is bounded continuous in \(0\leq s\leq t\leq T,\,x\in\mathbb{R}\) and \eqref{eq:ui1Initial} holds for all \(x\in\mathbb{R}\). Note also that the problem of validity of the same property for \(u_{i1}(s,x,t),\,i=1,2,\) in the case \(\mu=\frac{1}{2}\) requires an additional verification for every individual situation.

An important property of the function \(u_{i1}\) is described by the so-called theorem on the jump of conormal derivative of a parabolic simple-layer potential (see, e.g., \cite{baderko1}, \cite[Ch.V, \S2]{friedman1}, \cite[Ch.IV, \S15]{ladyzhenskaya} as well as \cite[Ch.XXII, \S8]{pogorzelski2} for more general results). For the potential \(u_{i1}(s,x,t)\) under consideration, this theorem asserts that if the curve \(h(s)\) belongs to \(H^{\frac{1+\alpha}{2}}([0,T])\) for some \(\alpha\in(0,1)\), the function \(V_i(\tau,t)\) is continuous in \(0\leq\tau<t\leq T\) and satisfies \eqref{ineq:SingularityV} in this domain, then for every point \(x=h(s),\,s\in[0,t)\) the function \(u_{i1}(s,x,t)\) satisfies the relation
\begin{equation}\label{eq:JumpFormula}
\lim\limits_{x\rightarrow h(s)\pm0}\frac{\partial u_{i1}}{\partial x}(s,x,t)=\mp\frac{V_i(s,t)}{b_i(s,h(s))}+\int\limits_s^t\frac{\partial G_i}{\partial x}(s,h(s),\tau,h(\tau))V_i(\tau,t)d\tau,\quad i=1,2.
\end{equation}
The integral on the right-hand side of \eqref{eq:JumpFormula} is called the direct value of a simple-layer potential. Its existence follows from the inequality
\begin{equation}\label{ineq:DxGiEstimate}
\left|\frac{\partial G_i}{\partial x}(s,h(s),\tau,h(\tau))\right|\leq C(\tau-s)^{-1+\frac{\alpha}{2}},\quad i=1,2.
\end{equation}

\section{Solving the nonlocal parabolic conjugation problem using the boundary integral equations method}
In this section we prove the existence and uniqueness theorems for the conjugation problem \eqref{eq:ParabolicEq}-\eqref{eq:FellerWentzell} (Theorems \ref{th:Existence} and \ref{th:Uniqueness}). We begin with
\begin{thm}[Existence]\label{th:Existence} Let the conditions I-IV hold. Then the nonlocal parabolic conjugation problem \eqref{eq:ParabolicEq}-\eqref{eq:FellerWentzell} has a classical solution which is continuous in \(\overline{S}_t\).
\end{thm}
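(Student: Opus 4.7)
The plan is to use the boundary integral equations method. I would seek the solution in the form
\[
u_i(s,x,t) = u_{i0}(s,x,t) + u_{i1}(s,x,t), \qquad i=1,2,
\]
where $u_{i0}$ is the Poisson potential built from the initial datum $\varphi$ and $u_{i1}$ is the simple-layer potential supported on the moving curve $h(\tau)$ with unknown density $V_i(\tau,t)$, as defined in \eqref{eq:Poisson}--\eqref{eq:Simple-layer}. By construction each $u_i$ automatically satisfies the equation \eqref{eq:ParabolicEq} inside $S_t^{(i)}$ and the initial condition \eqref{eq:InitialCond} (using \eqref{eq:PoissonInitialCond} and \eqref{eq:ui1Initial}), so the entire burden is shifted onto choosing $V_1,V_2$ to satisfy the two conjugation conditions \eqref{eq:FellerProperty}--\eqref{eq:FellerWentzell}.

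Next I would substitute the ansatz into $B_1u=0$ and $B_2u=0$. Because simple-layer potentials are continuous across the curve, $B_1u=0$ becomes a Volterra integral identity
\[
\int_s^t\!\bigl[G_1(s,h(s),\tau,h(\tau))V_1(\tau,t)-G_2(s,h(s),\tau,h(\tau))V_2(\tau,t)\bigr]d\tau
=u_{20}(s,h(s),t)-u_{10}(s,h(s),t).
\]
For the second condition I would use the jump formula \eqref{eq:JumpFormula} to compute one-sided derivatives of $u_{i1}$, which pulls out the off-integral terms $\pm V_i(s,t)/b_i(s,h(s))$ and leaves direct-value integrals controlled by \eqref{ineq:DxGiEstimate}. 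The nonlocal term $\sum_i\int_{D_{is}}[u_i(s,h(s),t)-u_i(s,y,t)]\mu(s,dy)$ must be rewritten by splitting $D_{is}=D_{is}^\delta\cup(D_{is}\setminus D_{is}^\delta)$ and using the bound \eqref{ineq:InfiniteMeasure}: on $D_{is}^\delta$ one expands $u_i(s,h(s),t)-u_i(s,y,t)=-(y-h(s))\frac{\partial u_i}{\partial x}(s,h(s),t)+O(|y-h(s)|^{1+\gamma})$ so that the resulting integral with respect to $|y-h(s)|\mu(s,dy)$ converges, while on $D_{is}\setminus D_{is}^\delta$ the finiteness of $\mu$ is used directly. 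Combining, $B_2u=0$ becomes a second equation of the form $Q_1(s)V_1(s,t)+Q_2(s)V_2(s,t)=$ direct-value integrals of $V_1,V_2$ with kernels $O((\tau-s)^{-1+\alpha/2})$ plus a bounded right-hand side. After solving the two equations for $V_1(s,t),V_2(s,t)$ (using \eqref{ineq:ReflectionCoef} to invert the $2\times 2$ system of coefficients), I obtain a system of singular Volterra integral equations of the second kind.

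I would solve this system by successive approximations: set $V_i^{(0)}$ equal to the free term and define iteratively
\[
V_i^{(n+1)}(s,t)=V_i^{(0)}(s,t)+\sum_{j=1}^2\int_s^t K_{ij}(s,\tau)V_j^{(n)}(\tau,t)\,d\tau.
\]
The crucial estimates are $|V_i^{(0)}(s,t)|\le C\|\varphi\|(t-s)^{-\mu_0}$ for some $\mu_0<\tfrac12$, coming from \eqref{ineq:PoissonEstimate} and \eqref{ineq:DxGiEstimate}, together with the kernel bound $|K_{ij}(s,\tau)|\le C(\tau-s)^{-1+\alpha/2}$; since $\alpha>0$ the iterated kernels gain regularity, and Beta-function identities yield a geometric majorant that converges. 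This gives continuous $V_i(s,t)$ on $0\le s<t\le T$ satisfying \eqref{ineq:SingularityV} with $\mu<\tfrac12$, which in turn guarantees that the corresponding $u_{i1}$ extends continuously up to $\overline{S}_t^{(i)}$ with vanishing initial value.

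The main obstacle is controlling the nonlocal part contributed by $\mu(s,\cdot)$ simultaneously with the low regularity of $h$. Concretely, one must verify that the kernels produced by the integral $\int_{D_{is}^\delta}[\cdots]\mu(s,dy)$, after acting on the simple-layer potential $u_{i1}$ evaluated at $y$ rather than at $h(s)$, still have admissible weak singularity of order $(\tau-s)^{-1+\varepsilon}$ for some $\varepsilon>0$; this is where the Hölder exponent $\tfrac{1+\alpha}{2}>\tfrac12$ for $h$ (condition V) and the integrability property of $\mu$ in condition IV must be used together, generalising the stationary-membrane arguments of \cite{kopytko2,kopytko3}. Once the density system is solved, substituting back gives $u_i$ that are classical solutions in $S_t^{(i)}$, continuous and bounded in $\overline{S}_t^{(i)}$, matching at $x=h(s)$ by construction, thereby producing a classical solution $u$ of \eqref{eq:ParabolicEq}--\eqref{eq:FellerWentzell} continuous on $\overline{S}_t$.
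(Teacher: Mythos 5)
Your overall strategy (potential ansatz, reduction to integral equations for the densities $V_i$, successive approximations) is the same as the paper's, but there are two genuine gaps at the places where the real work happens. First, the equation you get from $B_1u=0$ is a Volterra equation of the \emph{first} kind: it contains no off-integral terms $V_i(s,t)$ at all, so your step ``solving the two equations for $V_1(s,t),V_2(s,t)$ by inverting the $2\times 2$ system of coefficients'' cannot be carried out --- the coefficient matrix has a zero row, and the single relation coming from $B_2u=0$ does not determine both densities (note also that $q_1$ or $q_2$ may individually vanish, only $q_1+q_2>0$ is assumed). The paper resolves this by applying the Holmgren-type integro-differential operator $\mathcal{E}f(s,t)=\sqrt{2/\pi}\,\partial_s\int_s^t(\rho-s)^{-1/2}f(\rho,t)\,d\rho$ to the first-kind equation, which produces the off-integral terms $(-1)^iV_i(s,t)/\sqrt{b_i(s,h(s))}$ and an invertible system; establishing that the new kernels $R_j(s,\tau)$ satisfy $|R_j(s,\tau)|\le C(\tau-s)^{-1+\alpha/2}$ is a substantial computation in which the H\"older exponent $\tfrac{1+\alpha}{2}>\tfrac12$ of $h$ is used. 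This entire mechanism is missing from your proposal.

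Second, your claim that all kernels of the resulting system are weakly singular, $|K_{ij}(s,\tau)|\le C(\tau-s)^{-1+\alpha/2}$, so that Beta-function identities give a geometric majorant, is false under the stated hypotheses on $\mu$. The term $\int_{D_{js}^{\delta}}[G_j(s,y,\tau,h(\tau))-G_j(s,h(s),\tau,h(\tau))]\mu(s,dy)$ only admits the bound $C(\tau-s)^{-1}\int_{D_{js}^\delta}|y-h(s)|\mu(s,dy)$, because $\mu$ is merely required to integrate $|y-h(s)|$ near the membrane; this singularity is \emph{non-integrable}, and your Taylor expansion with remainder $O(|y-h(s)|^{1+\gamma})$ presupposes moment control of $\mu$ that is not assumed. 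The paper's proof hinges on splitting this strongly singular part as $\widetilde{I}_j^{(3)}=\widetilde{I}_j^{(31)}+\widetilde{I}_j^{(32)}$, where $\widetilde{I}_j^{(31)}$ is weakly singular thanks to condition V, while $\widetilde{I}_j^{(32)}$ carries an explicit Gaussian factor and a weight $(y-h(s))^2$; the exact identity
\begin{equation*}
\int_s^t(t-\tau)^{-\frac12}(\tau-s)^{-\frac32}\exp\Bigl\{-\tfrac{\theta(1-\theta)(y-h(s))^2}{2B(\tau-s)}\Bigr\}d\tau
=\Bigl(\tfrac{2\pi B}{\theta(1-\theta)(t-s)}\Bigr)^{\frac12}\tfrac{1}{|y-h(s)|}\exp\Bigl\{-\tfrac{\theta(1-\theta)(y-h(s))^2}{2B(t-s)}\Bigr\}
\end{equation*}
then trades the non-integrable time singularity for one power of $|y-h(s)|$, producing a factor $m(\delta)$ that can be made less than $1$ by shrinking $\delta$; convergence of the iteration follows from an induction giving the binomial-type bound \eqref{ineq:VikEstimate}, not a plain geometric series. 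Relatedly, the free terms satisfy only $|\Psi_i(s,t)|\le C\|\varphi\|(t-s)^{-1/2}$ (the borderline exponent $\mu=\tfrac12$, not some $\mu_0<\tfrac12$), so the continuity of $u_{i1}$ up to $s=t$ and the vanishing initial value require the additional localization argument \eqref{ineq:ViE} rather than following automatically.
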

\begin{proof}
We look for a solution \(u(s,x,t)=u_i(s,x,t),\,(s,x)\in \overline{S}_t^{(i)},\,i=1,2,\) of the problem \eqref{eq:ParabolicEq}-\eqref{eq:FellerWentzell} of the form
\begin{equation}\label{eq:Solution}
u_i(s,x,t)=u_{i0}(s,x,t)+u_{i1}(s,x,t),\quad (s,x)\in \overline{S}_t^{(i)},\,i=1,2,
\end{equation}
where the functions \(u_{i0}\) and \(u_{i1}\) are defined by the formulas \eqref{eq:Poisson} and \eqref{eq:Simple-layer}, respectively, in which \(\varphi\) is the "initial" function in \eqref{eq:InitialCond} and \(V_i\) are unknown functions to be found.

Suppose a priori that the unknown densities \(V_i(s,t),\,i=1,2,\) are continuous in the domain \(0\leq s<t\leq T\) and satisfy \eqref{ineq:SingularityV} with \(0\leq\mu\leq\frac{1}{2}\) in this domain. Using the conjugation conditions \eqref{eq:FellerProperty} and \eqref{eq:FellerWentzell}, in view of \eqref{eq:JumpFormula}, we obtain the following system of two Volterra integral equations for \(V_i\):
\begin{align}
&\sum\limits_{i=1}^2 (-1)^{i-1}\int\limits_s^t G_i(s,h(s),\tau, h(\tau))V_i(\tau,t)d\tau=\Phi_0(s,t),\label{eq:VolterraI}\\
&\sum\limits_{i=1}^2 \frac{q_i(s)}{b_i(s,h(s))}V_i(s,t)=\Psi(s,t)+\sum\limits_{j=1}^2\int\limits_s^t K_j(s,\tau)V_j(\tau,t)d\tau,\label{eq:VolterraII}
\end{align}
where
\[
\begin{split}\nonumber
\Phi_0(s,t)&=u_{20}(s,h(s),t)-u_{10}(s,h(s),t),\\
\Psi(s,t)&=q_2(s)\frac{\partial u_{20}}{\partial x}(s,h(s),t)-q_1(s)\frac{\partial u_{10}}{\partial x}(s,h(s),t)+\\
&+\sum\limits_{i=1}^2\int\limits_{D_{is}}[u_{i0}(s,y,t)-u_{i0}(s,h(s),t)]\mu(s,dy),\\
K_j(s,\tau)&=(-1)^j q_j(s)\frac{\partial G_j}{\partial x}(s,h(s),\tau,h(\tau))+\\
&+\int\limits_{D_{js}}[G_j(s,y,\tau,h(\tau))-G_j(s,h(s),\tau,h(\tau))]\mu(s,dy),\quad j=1,2.
\end{split}
\]

We see that the above system of equations, in addition to the Volterra integral equation of the second kind \eqref{eq:VolterraII}, contains also the Volterra integral equation of the first kind \eqref{eq:VolterraI}. Using the Holmgren transform (see \cite{kamynin1,kamynin2,kopytko2,kopytko3,shevchuk}), we reduce this equation to the equivalent Volterra integral equation of the second kind. For this purpose we introduce the integro-differential operator \(\mathcal{E}\) as follows:
\begin{equation}\label{eq:Holmgren}
\mathcal{E}f(s,t)=\sqrt{\frac{2}{\pi}}\frac{\partial}{\partial s}\int_s^t(\rho-s)^{-\frac{1}{2}}f(\rho,t)d\rho,\quad 0\leq s<t\leq T.
\end{equation}
Consider first the application of the operator \(\mathcal{E}\) to the right-hand side of \eqref{eq:VolterraI}, i.e., to the function \(\Phi_0(s,t)\). From the properties of Poisson potentials noted in Section 1, it follows that the function \(\Phi_0(s,t)\) is continuous in \(\overline{S}_t\) and satisfies the condition
\begin{equation}\label{eq:LimPhi0}
\lim\limits_{s\uparrow t}\Phi_0(s,t)=0.
\end{equation}
Furthermore, from the finite-increments formula and from the estimates \eqref{ineq:PoissonEstimate}, it easily follows that
\begin{equation}\label{ineq:Phi0Estimate}
|\Phi_0(s,t)-\Phi_0(\widetilde{s},t)|\leq C\|\varphi\|(t-s)^{-\frac{1+\alpha}{2}}(s-\widetilde{s})^{\frac{1+\alpha}{2}},\quad \widetilde{s}<s,\,\alpha\in (0,1).
\end{equation}
Let \(\Phi(s,t)=\mathcal{E}\Phi_0(s,t)\). In view of \eqref{eq:LimPhi0}, \eqref{ineq:Phi0Estimate}, we obtain the formula
\begin{equation}\label{eq:PhiFormula}
\Phi(s,t)=\frac{1}{\sqrt{2\pi}}\int\limits_s^t(\rho-s)^{-\frac{3}{2}}[\Phi_0(\rho,t)-\Phi_0(s,t)]d\rho-\sqrt{\frac{2}{\pi}}(t-s)^{-\frac{1}{2}}\Phi_0(s,t)
\end{equation}
and the following estimate:
\begin{equation}\label{ineq:PhiEstimate}
|\Phi(s,t)|\leq C\|\varphi\|(t-s)^{-\frac{1}{2}},\quad 0\leq s<t\leq T.
\end{equation}

Denote by \(I(s,t)\) the expression on the left-hand side of \eqref{eq:VolterraI}. Applying the operator \(\mathcal{E}\) to \(I(s,t)\), we get
\[
\mathcal{E}I(s,t)=\sqrt{\frac{2}{\pi}}\frac{\partial}{\partial s}\int\limits_s^t(\rho-s)^{-\frac{1}{2}}\bigg[\sum\limits_{i=1}^2(-1)^{i-1}\int\limits_s^t G_i(\rho,h(\rho),\tau,h(\tau))V_i(\tau,t)d\tau\bigg]d\rho.
\]

In order to take the derivative with respect to \(s\) in the last expression, we use the representation
\begin{align}
G_i(\rho,h(\rho),\tau,h(\tau))&=[G_i(\rho,h(\rho),\tau,h(\tau))-G_i(\rho,h(\tau),\tau,h(\tau))]+\nonumber\\
&+G_i(\rho,h(\tau),\tau,h(\tau)),\quad i=1,2.\nonumber
\end{align}
Interchanging the order of integration and using the finite-increments formula for the difference \(G_i(\rho,h(\rho),\tau,h(\tau))-G_i(\rho,h(\tau),\tau,h(\tau))\), the condition V, the representation \eqref{eq:FS} for the function \(G_i(\rho,h(\tau),\tau,h(\tau))\), the relation
\[
\int\limits_s^{\tau}(\tau-\rho)^{-\frac{1}{2}}(\rho-s)^{-\frac{1}{2}}d\rho=\pi
\]
and the estimates \eqref{ineq:FSEstimate1} and \eqref{ineq:FSEstimate2}, we find that
\begin{equation}\label{eq:E(s,t)I}
\begin{split}
\hat{I}(s,t)&=\mathcal{E}I(s,t)=\sum\limits_{i=1}^2(-1)^i\frac{V_i(s,t)}{\sqrt{b_i(s,h(s))}}-\\
&-\sum\limits_{j=1}^2\int\limits_s^t R_j(s,\tau)V_j(\tau,t)d\tau,\quad 0\leq s<t\leq T,
\end{split}
\end{equation}
where
\[
\begin{split}
R_j(s,\tau)&=(-1)^j\sqrt{\frac{2}{\pi}}\frac{\partial}{\partial s}\int\limits_s^{\tau}(\rho-s)^{-\frac{1}{2}}[Z_{j1}(\rho,h(\tau),\tau,h(\tau))+\\
&+(G_j(\rho,h(\rho),\tau,h(\tau))-G_j(\rho,h(\tau),\tau,h(\tau)))]d\rho,\quad j=1,2.
\end{split}
\]
Furthermore, for the kernels \(R_j(s,\tau),\,j=1,2,\) the inequality
\begin{equation}\label{ineq:RjEstimate}
|R_j(s,\tau)|\leq C(\tau-s)^{-1+\frac{\alpha}{2}}
\end{equation}
holds when \(0\leq s<\tau\leq t\leq T.\)

Let us prove the inequality \eqref{ineq:RjEstimate}. We will use the notations \[\Delta_t^{\tau}f(t,x)\equiv f(t,x)-f(\tau,x),\quad\Delta_x^{y}f(t,x)\equiv f(t,x)-f(t,y)\] and the following representation for \(R_j(s,\tau),\,j=1,2:\)
\begin{equation}\label{eq:RjRepresentation}
\begin{split}
R_j(s,\tau)&=\frac{(-1)^j}{\sqrt{2\pi}}\int\limits_s^{\tau}(\rho-s)^{-\frac{3}{2}}\big[\Delta_{\rho}^sZ_{j1}(\rho,h(\tau),\tau,h(\tau))+\\
&+\Delta_{\rho}^s\Delta_{h(\rho)}^{h(\tau)}G_j(\rho,h(\rho),\tau,h(\tau))+\Delta_{h(\rho)}^{h(s)}G_j(s,h(\rho),\tau,h(\tau))\big]d\rho+\\
&+(-1)^{j-1}\sqrt{\frac{2}{\pi}}(\tau-s)^{-\frac{1}{2}}\big[Z_{j1}(s,h(\tau),\tau,h(\tau))+\\
&+\Delta_{h(s)}^{h(\tau)}G_j(s,h(s),\tau,h(\tau))\big]=R_j^{(1)}(s,\tau)+R_j^{(2)}(s,\tau).
\end{split}
\end{equation}

To estimate \(R_j^{(2)}(s,\tau)\), we use the finite-increments formula for the difference \[\Delta_{h(s)}^{h(\tau)}G_j(s,h(s),\tau,h(\tau)),\] the condition V, and the inequalities \eqref{ineq:FSEstimate1} and \eqref{ineq:FSEstimate2}. We obtain
\begin{equation}\label{ineq:Rj2Estimate}
|R_j^{(2)}(s,\tau)|\leq C (\tau-s)^{-\frac{1}{2}}\left[(\tau-s)^{-\frac{1}{2}+\frac{\alpha}{2}}+(\tau-s)^{-1}(\tau-s)^{\frac{1+\alpha}{2}}\right]\leq C(\tau-s)^{-1+\frac{\alpha}{2}}.
\end{equation}

Using the representation \eqref{eq:FS}, the mean value theorem for the functions \(G_i,\,Z_{i0},\,Z_{i1},\) \(i=1,2,\) the H\"{o}lder condition for the function \(h\) and the inequalities \eqref{ineq:FSEstimate1} and \eqref{ineq:FSEstimate2}, we can estimate the integrands and the integral itself, which represents the function \(R_j^{(1)}(s,\tau)\). To do so, we split this integral into two parts integrated over \((s,\frac{s+\tau}{2})\) and \((\frac{s+\tau}{2},\tau),\) respectively. We use the inequalities
\begin{equation}\nonumber
\begin{split}
&(\tau-\rho)\geq\frac{1}{2}(\tau-s),\quad |\Delta_{\rho}^s Z_{j1}(\rho,h(\tau),\tau,h(\tau))|\leq C(\rho-s)(\tau-\rho)^{-\frac{3}{2}+\frac{\alpha}{2}},\\
&|\Delta_{\rho}^s\Delta_{h(\rho)}^{h(\tau)} G_j(\rho,h(\rho),\tau,h(\tau))|\leq|\Delta_{\rho}^s\Delta_{h(\rho)}^{h(\tau)} Z_{j0}(\rho,h(\rho),\tau,h(\tau))|+\\
&+|\Delta_{\rho}^s\Delta_{h(\rho)}^{h(\tau)} Z_{j1}(\rho,h(\rho),\tau,h(\tau))|\leq C(\rho-s)[(\tau-\rho)^{-2}(\tau-\rho)^{\frac{1+\alpha}{2}}+(\tau-\rho)^{-\frac{3}{2}+\frac{\alpha}{2}}]\leq\\
&\leq C(\rho-s)(\tau-\rho)^{-\frac{3}{2}+\frac{\alpha}{2}},\quad|\Delta_{h(\rho)}^{h(s)}G_j(s,h(\rho),\tau,h(\tau))|\leq C(\tau-s)^{-1}(\rho-s)^{\frac{1+\alpha}{2}}
\end{split}
\end{equation}
in the first integral and the inequalities
\begin{equation}
\begin{split}\nonumber
&\rho-s\geq\frac{1}{2}(\tau-s),\quad |\Delta_{\rho}^s Z_{j1}(\rho,h(\tau),\tau,h(\tau))|\leq C[(\tau-\rho)^{-\frac{1}{2}+\frac{\alpha}{2}}+(\tau-s)^{-\frac{1}{2}+\frac{\alpha}{2}}],\\
&|\Delta_{\rho}^s\Delta_{h(\rho)}^{h(\tau)}G_j(\rho,h(\rho),\tau,h(\tau))|\leq |\Delta_{h(\rho)}^{h(\tau)}G_j(\rho,h(\rho),\tau,h(\tau))|+|\Delta_{h(\rho)}^{h(\tau)}G_j(s,h(\rho),\tau,h(\tau))|\leq\\
&\leq C[(\tau-\rho)^{-\frac{1}{2}+\frac{\alpha}{2}}+(\tau-s)^{-1}(\tau-\rho)^{\frac{1+\alpha}{2}}],\\
&|\Delta_{h(\rho)}^{h(s)}G_j(s,h(\rho),\tau,h(\tau))|\leq C(\tau-s)^{-1}(\rho-s)^{\frac{1+\alpha}{2}}
\end{split}
\end{equation}
in the second one.

Consequently,
\begin{equation}\label{ineq:Rj1Estimate}
\begin{split}
&|R_j^{(1)}(s,\tau)|\leq C\bigg(\int\limits_s^{\frac{s+\tau}{2}}[(\rho-s)^{-\frac{1}{2}}(\tau-s)^{-\frac{3}{2}+\frac{\alpha}{2}}+(\rho-s)^{-1+\frac{\alpha}{2}}(\tau-s)^{-1}]d\rho+\\
&+\int\limits_{\frac{s+\tau}{2}}^{\tau}(\tau-s)^{-\frac{3}{2}}[(\tau-\rho)^{-\frac{1}{2}+\frac{\alpha}{2}}+(\tau-s)^{-\frac{1}{2}+\frac{\alpha}{2}}+(\tau-s)^{-1}(\tau-\rho)^{\frac{1+\alpha}{2}}+\\
&+(\tau-s)^{-1}(\rho-s)^{\frac{1+\alpha}{2}}]d\rho\bigg)\leq C(\tau-s)^{-1+\frac{\alpha}{2}},\quad j=1,2.
\end{split}
\end{equation}

Combining \eqref{ineq:Rj2Estimate} with \eqref{ineq:Rj1Estimate}, we conclude that the inequality (\ref{ineq:RjEstimate}) holds.

From \eqref{eq:PhiFormula} and \eqref{eq:E(s,t)I}, it follows that the application of the operator \(\mathcal{E}\) to the both sides of \eqref{eq:VolterraI} leads to the Volterra integral equation of the second kind
\begin{equation}\label{eq:VolterraIReplacedByII}
\sum\limits_{i=1}^2(-1)^i\frac{V_i(s,t)}{\sqrt{b_i(s,h(s))}}=\Phi(s,t)+\sum\limits_{j=1}^2\int\limits_s^t R_j(s,\tau)V_j(\tau,t)d\tau.
\end{equation}

Thus, the system of equations \eqref{eq:VolterraI} and \eqref{eq:VolterraII} can be replaced by the equivalent system of Volterra integral equations of the second kind \eqref{eq:VolterraIReplacedByII} and \eqref{eq:VolterraII} which, after simple transformations, can be represented as
\begin{equation}\label{eq:SystemVolterraEqII}
V_i(s,t)=\Psi_i(s,t)+\sum\limits_{j=1}^2\int\limits_s^t N_{ij}(s,\tau)V_j(\tau,t)d\tau,\quad i=1,2,
\end{equation}
where
\[
\begin{split}
&\Psi_i(s,t)=d_i(s)\left[\Psi(s,t)+\frac{(-1)^i q_{3-i}(s)}{\sqrt{b_{3-i}(s,h(s))}}\Phi(s,t)\right],\quad i=1,2,\\
&N_{ij}(s,\tau)=d_i(s)\left[K_j(s,\tau)+\frac{(-1)^iq_{3-i}(s)}{\sqrt{b_{3-i}(s,h(s))}}R_j(s,\tau)\right],\quad i,j=1,2,\\
&d_i(s)=\frac{b_i(s,h(s))\sqrt{b_{3-i}(s,h(s))}}{q_1(s)\sqrt{b_2(s,h(s))}+q_2(s)\sqrt{b_1(s,h(s))}},\quad i=1,2.
\end{split}
\]

Let us investigate the properties of the functions \(\Psi_i(s,t),\,i=1,2,\) and the singularities of the kernels \(N_{ij}(s,\tau),\,i,j=1,2,\) in the system of integral equations \eqref{eq:SystemVolterraEqII} and prove that this system can be solved using the method of successive approximations.

First, consider the right-hand sides of these equations, i.e., the functions \(\Psi_i(s,t),\,i=1,2,\) which are represented by \(\Psi(s,t)\) and \(\Phi(s,t)\) in \eqref{eq:VolterraII} and \eqref{eq:PhiFormula}, respectively. We have already established that the function \(\Phi(s,t)\) is continuous in \(s\in[0,t)\) and satisfies the inequality \eqref{ineq:PhiEstimate}. Let us prove that the same inequality holds also for the function \(\Psi(s,t)\). Indeed, the validity of \eqref{ineq:PhiEstimate} for the first two terms in the expression for \(\Psi(s,t)\) is an immediate consequence of the property V of the functions \(q_i(s),\,i=1,2,\) and the inequality \eqref{ineq:PoissonEstimate} with (\(r=0,\,p=1\)). The third term is represented as the sum of two integrals (we denote them by \(I_1(s,t)\) and \(I_2(s,t)\)) with respect to the measure \(\mu\) over \(D_{1s}\) and \(D_{2s},\) respectively. To estimate the integral \(I_i(s,t),\,i=1,2,\) split the range of integration \(D_{is}\) into \(D_{is}^1=\{y:\, y\in D_{is},\,|y-h(s)|<1\}\) and \(D_{is}\setminus D_{is}^1\). Put
\[I_i(s,t)=\int\limits_{D_{is}^1}\Delta_y^{h(s)}u_{i0}(s,y,t)\mu(s,dy)+\int\limits_{D_{is}\setminus D_{is}^1}\Delta_y^{h(s)}u_{i0}(s,y,t)\mu(s,dy).\]
To estimate the first integral, we apply the mean value theorem, the inequality \eqref{ineq:PoissonEstimate} (with \(r=0,\,p=1\)), and the inequality \eqref{ineq:InfiniteMeasure}. To estimate the second one, we only use the inequality \eqref{ineq:PoissonEstimate} (with \(r=0,\,p=0\)) and the inequality \eqref{ineq:InfiniteMeasure}. We then get
\[
\begin{split}
|I_i(s,t)|&\leq C\|\varphi\|\bigg[(t-s)^{-\frac{1}{2}}\int\limits_{D_{is}^1}|y-h(s)|\mu(s,dy)+\int\limits_{D_{is}\setminus D_{is}^1}\mu(s,dy)\bigg]\leq\\
&\leq C\|\varphi\|(t-s)^{-\frac{1}{2}},\quad 0\leq s<t\leq T,\,i=1,2.\\\\
\end{split}
\]
This completes the proof of the inequality \eqref{ineq:PhiEstimate} for \(\Psi(s,t)\).

Having established the estimates for \(\Phi(s,t)\) and \(\Psi(s,t)\), in view of the properties of the coefficients of the equation \eqref{eq:ParabolicEq} and the parameters \(q_i,\,i=1,2\) in \eqref{eq:FellerWentzell}, we find that the functions \(\Psi_i(s,t),\,i=1,2,\) are continuous in \(s\in[0,t)\) and that the inequality
\begin{equation}\label{ineq:PsiiEstimate}
|\Psi_i(s,t)|\leq C_0\|\varphi\|(t-s)^{-\frac{1}{2}}
\end{equation}
holds for all \(0\leq s<t\leq T\), where \(C_0\) is a positive constant.

We now get down to studying the kernels \(N_{ij}(s,\tau),\,i=1,2,\) of the integral equations in \eqref{eq:SystemVolterraEqII}, which are represented by \(K_j(s,\tau)\) and \(R_j(s,\tau)\) in \eqref{eq:VolterraII} and \eqref{eq:E(s,t)I}, respectively. We have already established that the functions \(R_j(s,\tau)\) have integrable singularity when \(\tau=t\) and satisfy the estimate \eqref{ineq:RjEstimate}. From \eqref{ineq:DxGiEstimate}, it follows that the first term in the expression for \(K_j(s,\tau)\) allows the same singularity. Thus, it remains to investigate only the integrals with respect to the measure \(\mu\) in the formula for \(K_j(s,\tau),\) which we denote by \(\widetilde{I}_j(s,\tau),\,j=1,2.\) Taking into account \eqref{eq:FS} and splitting \(D_{js}\) into \(D_{js}^{\delta}\) and \(D_{js}\setminus D_{js}^{\delta},\, \delta>0\), we put
\begin{equation}\label{eq:WidetildeIj}
\begin{split}
\widetilde{I}_j(s,\tau)&=\int\limits_{D_{js}\setminus D_{js}^{\delta}}\Delta_y^{h(s)}G_j(s,y,\tau,h(\tau))\mu(s,dy)+\\
&+\int\limits_{D_{js}^{\delta}}\Delta_y^{h(s)}Z_{j1}(s,y,\tau,h(\tau))\mu(s,dy)+\\
&+\int\limits_{D_{js}^{\delta}}\Delta_y^{h(s)}Z_{j0}(s,y,\tau,h(\tau))\mu(s,dy)=\sum\limits_{k=1}^{3}\widetilde{I}_{j}^{(k)}(s,\tau).
\end{split}
\end{equation}
To estimate the first integral \(\widetilde{I}_{j}^{(1)}\) in \eqref{eq:WidetildeIj}, use \eqref{ineq:FSEstimate1} (with \(r=p=0\)) and the inequality \eqref{ineq:InfiniteMeasure}. We find that
\begin{equation}\label{ineq:WidetildeIj1}
|\widetilde{I}_j^{(1)}(s,\tau)|\leq C(\tau-s)^{-\frac{1}{2}}\int\limits_{D_{js}\setminus D_{js}^{\delta}}\mu(s,dy)\leq C(\delta)(\tau-s)^{-\frac{1}{2}},\quad j=1,2,
\end{equation}
where \(0\leq s<\tau\leq t\leq T,\,C(\delta)\) is a positive constant which depends on \(\delta\).

To estimate the second and the third integrals in \eqref{eq:WidetildeIj}, apply the mean value theorem, the inequality \eqref{ineq:InfiniteMeasure} and the estimates \eqref{ineq:FSEstimate2} and \eqref{ineq:FSEstimate1}. We have \((0\leq s<\tau\leq t\leq T)\)
\begin{align}
&|\widetilde{I}_j^{(2)}(s,\tau)|\leq C(\tau-s)^{-1+\frac{\alpha}{2}}\int\limits_{D_{js}^{\delta}}|y-h(s)|\mu(s,dy)\leq C(\delta)(\tau-s)^{-1+\frac{\alpha}{2}},\quad j=1,2,\label{ineq:WidetildeIj2}\\
&|\widetilde{I}_j^{(3)}(s,\tau)|\leq C(\tau-s)^{-1}\int\limits_{D_{js}^{\delta}}|y-h(s)|\mu(s,dy)\leq C(\delta)(\tau-s)^{-1},\quad j=1,2.\label{ineq:WidetildeIj3}
\end{align}

Combining the inequalities \eqref{ineq:WidetildeIj1}-\eqref{ineq:WidetildeIj3}, for \(K_j(s,\tau),\) we get the estimate of the form \eqref{ineq:WidetildeIj3}. This means that the kernels \(N_{ij}(s,\tau),\,i,j=1,2,\) of the system of Volterra integral equations of the second kind \eqref{eq:SystemVolterraEqII} are strongly singular since they contain the terms which have a non-integrable singularity at \(\tau=s\). Despite this, let us prove that the ordinary method of successive approximations can still be applied to this system of equations.

Thus, we look for solutions of the system of integral equations \eqref{eq:SystemVolterraEqII} of the form of the series
\begin{equation}\label{eq:Series}
V_i(s,t)=\sum\limits_{k=0}^{\infty}V_i^{(k)}(s,t),\quad i=1,2,
\end{equation}
where
\begin{equation}\nonumber
\begin{split}
&V_i^{(0)}(s,t)=\Psi_i(s,t),\quad i=1,2,\\
&V_i^{(k)}(s,t)=\sum\limits_{j=1}^2\int\limits_s^t N_{ij}(s,\tau)V_j^{(k-1)}(\tau,t)d\tau,\quad i=1,2,\, k=1,2,\ldots
\end{split}
\end{equation}

We get down to proving the convergence of the series \eqref{eq:Series}. For the function \(V_i^{(0)}(s,t)=\Psi_i(s,t),\, i=1,2,\) we have already established the estimate \eqref{ineq:PsiiEstimate}. To estimate other approximations in the series \eqref{eq:Series}, we return once again to the expressions for the kernels \(K_j(s,\tau),\,j=1,2,\) and use the following representation for the integral \(\widetilde{I}_j^{(3)}\) in \eqref{eq:WidetildeIj}:
\[
\widetilde{I}_j^{(3)}(s,\tau)=-[2\pi b_j(\tau,h(\tau))(\tau-s)]^{-\frac{1}{2}}\int\limits_{D_{js}^{\delta}}\mu(s,dy)\int\limits_0^1\frac{\partial}{\partial\theta}\exp\left\{-\frac{A(\theta,y,h(\tau),h(s))}{2b_j(\tau,h(\tau))(\tau-s)}\right\}d\theta,
\]
where
\[
A(\theta,y,h(\tau),h(s))=(1-\theta)(y-h(\tau))^2+\theta(h(s)-h(\tau))^2.
\]
After differentiating the integrand with respect to \(\theta\), we get
\begin{equation}\label{eq:WidetildeIj3}
\widetilde{I}_j^{(3)}(s,\tau)=\widetilde{I}_j^{(31)}(s,\tau)+\widetilde{I}_j^{(32)}(s,\tau),
\end{equation}
where
\begin{align}
\widetilde{I}_j^{(31)}(s,\tau)&=\frac{h(\tau)-h(s)}{\sqrt{2\pi}[b_j(\tau,h(\tau))(\tau-s)]^{\frac{3}{2}}}\times\nonumber\\
&\times\int\limits_{D_{js}^{\delta}}(y-h(s))\mu(s,dy)\int\limits_0^1\exp\left\{-\frac{A(\theta,y,h(\tau),h(s))}{2b_j(\tau,h(\tau))(\tau-s)}\right\}d\theta,\nonumber\\
\widetilde{I}_j^{(32)}(s,\tau)&=-\frac{1}{2\sqrt{2\pi}[b_j(\tau,h(\tau))(\tau-s)]^{\frac{3}{2}}}\int\limits_{D_{js}^{\delta}}(y-h(s))^2\mu(s,dy)\times\nonumber\\
&\times\int\limits_0^1\exp\left\{-\frac{A(\theta,y,h(\tau),h(s))}{2b_j(\tau,h(\tau))(\tau-s)}\right\}d\theta.\nonumber
\end{align}

Using the condition V and the inequality \eqref{ineq:InfiniteMeasure}, we get the estimate \eqref{ineq:WidetildeIj2} for the term \(\widetilde{I}_j^{(31)}(s,\tau)\) on the right-hand side of \eqref{eq:WidetildeIj3}. Taking into account \eqref{eq:WidetildeIj} and \eqref{eq:WidetildeIj3}, we can write \(N_{ij}(s,\tau)\) in the form
\begin{equation}\label{eq:Nij}
N_{ij}(s,\tau)=N_{ij}^{(1)}(s,\tau)+N_{ij}^{(2)}(s,\tau),\quad i,j=1,2,
\end{equation}
where
\[N_{ij}^{(2)}(s,\tau)=d_i(s)\widetilde{I}_j^{(32)}(s,\tau),\]
and the functions \(N_{ij}^{(1)}(s,\tau)\) are defined by the same formulas like the ones for \(N_{ij}(s,\tau)\) with the integrals \(\widetilde{I}_j(s,\tau)\) in the expression for \(K_j(s,\tau)\) replaced by the sum of the integrals \(\widetilde{I}_j^{(1)}(s,\tau),\,\widetilde{I}_j^{(2)}(s,\tau)\) and \(\widetilde{I}_j^{(31)}(s,\tau)\). As we can see, all the terms in the expression for \(N_{ij}^{(1)}(s,\tau)\) satisfy the inequalities \eqref{ineq:RjEstimate} or \eqref{ineq:WidetildeIj2}. Combining them, we obtain the following estimate for \(N_{ij}^{(1)}(s,\tau)\):
\begin{equation}\label{ineq:Nij1Estimate}
|N_{ij}^{(1)}(s,\tau)|\leq C_1(\delta)(\tau-s)^{-1+\frac{\alpha}{2}},\quad i,j=1,2,
\end{equation}
which holds with some constant \(C_1(\delta)\) in every domain of the form \(0\leq s<\tau\leq t\leq T\).

Next, to estimate \(N_{ij}^{(2)}(s,\tau)\) on the right-hand side of \eqref{eq:Nij}, we use the following inequalities for \(A(\theta,y,h(\tau),h(s))\) and \(d_i(s)\,(i=1,2)\):
\begin{equation}\label{ineq:Ad_iEstimates}
A(\theta,y,h(\tau),h(s))\geq\theta(1-\theta)(y-h(s))^2,\quad d_i(s)\leq\frac{B}{q_0}\left(\frac{B}{b}\right)^{\frac{1}{2}}.\\
\end{equation}
The constants \(B\) and \(b\) in \eqref{ineq:Ad_iEstimates} are the same as in I, and \[q_0=\min\limits_{s\in[0,T]}(q_1(s)+q_2(s)),\quad q_0>0.\] We get
\begin{equation}
\begin{split}\label{ineq:Nij2Estimate}
|N_{ij}^{(2)}(s,\tau)|&\leq \frac{B^{\frac{3}{2}}}{2q_0b^2\sqrt{2\pi}}(\tau-s)^{-\frac{3}{2}}\int\limits_{D_{js}^{\delta}}(y-h(s))^2\mu(s,dy)\times\\
&\times\int\limits_0^1\exp\left\{-\frac{\theta(1-\theta)(y-h(s))^2}{2B(\tau-s)}\right\}d\theta.
\end{split}
\end{equation}

We now estimate \(V_i^{(1)}(s,t),\,i=1,2,\) in \eqref{eq:Series}. Applying \eqref{eq:Nij}, \eqref{ineq:Nij1Estimate}, \eqref{ineq:Nij2Estimate}, the relations
\begin{align}
&\int\limits_s^t (t-\tau)^{-\frac{1}{2}}(\tau-s)^{-\frac{3}{2}}\exp\left\{-\frac{\theta(1-\theta)(y-h(s))^2}{2B(\tau-s)}\right\}d\tau\nonumber\\
&=\left(\frac{2\pi B}{\theta(1-\theta)(t-s)}\right)^{\frac{1}{2}}\frac{1}{|y-h(s)|}\exp\left\{-\frac{\theta(1-\theta)(y-h(s))^2}{2B(t-s)}\right\},\nonumber\\
&\int\limits_0^1\theta^{-\frac{1}{2}}(1-\theta)^{-\frac{1}{2}}d\theta=\pi\nonumber
\end{align}
and the inequality \eqref{ineq:InfiniteMeasure}, we find that \((0\leq s<t\leq T)\)
\begin{align}
&|V_i^{(1)}(s,t)|\leq C_0\|\varphi\|\bigg[2C_1(\delta)\int\limits_s^t (t-\tau)^{-\frac{1}{2}}(\tau-s)^{-1+\frac{\alpha}{2}}d\tau+\nonumber\\
&+\frac{B^{\frac{3}{2}}}{2q_0b^2\sqrt{2\pi}}\int\limits_{D_s^{\delta}}(y-h(s))^2\mu(s,dy)\int\limits_0^1 d\theta\int\limits_s^t(\tau-s)^{-\frac{3}{2}}\times\nonumber\\
&\times(t-\tau)^{-\frac{1}{2}}\exp\left\{-\frac{\theta(1-\theta)(y-h(s))^2}{2B(\tau-s)}\right\}d\tau\bigg]\leq\nonumber\\
&\leq C_0\|\varphi\|(t-s)^{-\frac{1}{2}}\left[\frac{2C_1(\delta)\Gamma(\frac{\alpha}{2})\Gamma(\frac{1}{2})}{\Gamma(\frac{1}{2}+\frac{\alpha}{2})}(t-s)^{\frac{\alpha}{2}}+m(\delta)\right],\quad i=1,2,\label{ineq:Vi1Estimate}
\end{align}
where \(\Gamma(\sigma)\) is the gamma function,
\[m(\delta)=\left(\frac{B}{b}\right)^2\frac{\pi}{2q_0}\max_{s\in[0,T]}\int\limits_{D_s^{\delta}}|y-h(s)|\mu(s,dy).\]
Furthermore, note that our assumptions on the measure \(\mu\) in \eqref{eq:FellerWentzell} allow to choose a positive \(\delta\) such that \(m(\delta)<1.\) Fix such \(\delta=\delta_0\) and put \(C_1=C_1(\delta_0),\,m_0=m(\delta_0)<1.\)

In view of the last remark, one can establish the following estimate for \(V_i^{(k)}(s,t),\,i=1,2\,(0\leq s<t\leq T)\) by induction on \(k\):
\begin{equation}\label{ineq:VikEstimate}
|V_i^{(k)}(s,t)|\leq C_0\|\varphi\|(t-s)^{-\frac{1}{2}}\sum\limits_{n=0}^k\binom{k}{n}h_{s,t}^{(k-n)}m_0^n,
\end{equation}
where
\[h_{s,t}^{(k)}=\frac{(2C_1\Gamma(\frac{\alpha}{2}))^k\Gamma(\frac{1}{2})}{\Gamma(\frac{1+k\alpha}{2})}(t-s)^{\frac{k\alpha}{2}}.\]
Hence, we have \((0\leq s<t\leq T)\)
\begin{align}
\sum\limits_{k=0}^{\infty}|V_i^{(k)}(s,t)|&\leq C_0\|\varphi\|(t-s)^{-\frac{1}{2}}\sum\limits_{k=0}^{\infty}\sum\limits_{n=0}^k\binom{k}{n}h_{s,t}^{(k-n)}m_0^n=\nonumber\\
&=C_0\|\varphi\|(t-s)^{-\frac{1}{2}}\sum\limits_{k=0}^{\infty}h_{s,t}^{(k)}\sum\limits_{n=0}^{\infty}\binom{k+n}{n}m_0^n=\nonumber\\
&=C_0\|\varphi\|(t-s)^{-\frac{1}{2}}\sum\limits_{k=0}^{\infty}\frac{(2C_1\Gamma(\frac{\alpha}{2}))^k\Gamma(\frac{1}{2})}{\Gamma(\frac{1}{2}+\frac{k\alpha}{2})(1-m_0)^{k+1}}(t-s)^{\frac{k\alpha}{2}},\quad i=1,2.\label{ineq:SumVik}
\end{align}

The inequality \eqref{ineq:SumVik} ensures the absolute and uniform convergence of the series \eqref{eq:Series} for \(s\in[0,t)\) and leads to the estimate
\begin{equation}\label{ineq:Vi}
|V_i(s,t)|\leq C\|\varphi\|(t-s)^{-\frac{1}{2}},\quad i=1,2,
\end{equation}
where \(0\leq s<t\leq T\) and \(C\) is a positive constant.

Thus, the formula \eqref{eq:Series} represents the unique solution of the system of integral equations \eqref{eq:SystemVolterraEqII}, which is continuous in the domain \(0\leq s<t\leq T\) and for which the inequality \eqref{ineq:Vi} holds.

From the estimates \eqref{ineq:FSEstimate1} (with \(r=p=0\)) and \eqref{ineq:Vi}, it follows that there exist the simple-layer potentials \(u_{i1}(s,x,t),\,i=1,2,\) in \eqref{eq:Solution}, and for them the inequality
\begin{equation}\label{ineq:ui1}
|u_{i1}(s,x,t)|\leq C\|\varphi\|,\quad i=1,2,\quad (s,x)\in \overline{S}_t,
\end{equation}
holds. It is obvious (see \eqref{ineq:PoissonEstimate}) that the same inequality is also true for the Poisson potentials \(u_{i0}(s,x,t),\,i=1,2,\) in \eqref{eq:Solution} and thus for the function \(u(s,x,t)\) itself. It remains to verify \eqref{eq:ui1Initial} for every fixed \(t\in(0,T]\) and any \(x\in\mathbb{R}\). To do this, we consider the function \(V_i(s,t),\,0\leq s<t,\,i=1,2,\) and study its behavior in a neighborhood of the point \(s=t\). Using the considerations similar to those leading to \eqref{ineq:Vi}, one can prove the following assertion: for every \(t\in(0,T]\) and \(\varepsilon>0\) there exists \(s_0\in[0,t)\) such that the inequality
\begin{equation}\label{ineq:ViE}
|V_i(s,t)|\leq\varepsilon C(t-s)^{-\frac{1}{2}}
\end{equation}
holds for all \(s\in[s_0,t)\), where the constant \(C\) is independent of \(\varepsilon\).

Let \(\varepsilon>0\) and choose \(s_0\) such that \eqref{ineq:ViE} holds for all \(s\in[s_0,t)\). Then, from \eqref{ineq:ViE} and \eqref{ineq:FSEstimate1}, we deduce that
\[
|u_{i1}(s,x,t)|\leq \varepsilon C\int\limits_s^t(\tau-s)^{-\frac{1}{2}}(t-\tau)^{-\frac{1}{2}}d\tau=\varepsilon C\pi,\quad i=1,2
\]
(the constant \(C\) is independent of \(\varepsilon\)) for all \(s\in[s_0,t)\) and \(x\in\mathbb{R}\). Because of the arbitrariness of \(\varepsilon\) this implies that \eqref{eq:ui1Initial} holds for \(x\in\mathbb{R}\).

Taking into account \eqref{eq:PoissonInitialCond} and, at the same time, the fact that the functions \(u_{i0}(s,x,t)\) and \(u_{i1}(s,x,t)\) satisfy the equation \eqref{eq:ParabolicEq} in the domain \((s,x)\in S_t^{(i)},\,i=1,2,\) we conclude that \(u(s,x,t)\) is the desired classical solution of the problem \eqref{eq:ParabolicEq}-\eqref{eq:FellerWentzell}.

The proof of Theorem \ref{th:Existence} is now complete.
\end{proof}

Now, let us prove the uniqueness theorem.
\begin{thm}[Uniqueness]\label{th:Uniqueness} Let the conditions I-IV hold. Then there is at most one solution of the conjugation problem \eqref{eq:ParabolicEq}-\eqref{eq:FellerWentzell} which is continuous and bounded in \(\overline{S}_t\).
\end{thm}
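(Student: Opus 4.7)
The plan is to prove uniqueness by a parabolic maximum-principle / Hopf-lemma argument on the homogeneous backward problem. By linearity it suffices to show that any bounded continuous $u$ on $\overline{S}_t$ that satisfies \eqref{eq:ParabolicEq} in each $S_t^{(i)}$ with $\varphi\equiv 0$ in \eqref{eq:InitialCond} and $B_1u=B_2u=0$, is identically zero. Set $M=\sup_{\overline{S}_t}u$; by the symmetric argument applied to $-u$ it suffices to derive a contradiction from $M>0$.

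First I would apply the strong parabolic maximum principle in each $S_t^{(i)}$. After the time reversal $\tau=t-s$ the equation becomes a standard uniformly parabolic equation with bounded H\"older coefficients, so Nirenberg's strong maximum principle is available. The unboundedness of the spatial domain is handled by a Phragm\'en--Lindel\"of / barrier argument: using the Gaussian bound \eqref{ineq:FSEstimate1} on the fundamental solutions $G_i$ and the boundedness of $u$, one shows that the supremum cannot be taken at spatial infinity. Since $u(t,\cdot)=0<M$, the "initial" face $s=t$ is also excluded, so $M$ must be attained at some point $(s^*,h(s^*))$ on the membrane with $0\le s^*<t$. The Feller continuity $B_1u=0$ then gives $u_1(s^*,h(s^*),t)=u_2(s^*,h(s^*),t)=M$.

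Next I would invoke Hopf's lemma from inside each $S_t^{(i)}$. Condition V, namely $h\in H^{(1+\alpha)/2}([0,T])$ with $\alpha>0$, yields $|h(s)-h(s^*)|=O(|s-s^*|^{(1+\alpha)/2})=o(\sqrt{|s-s^*|})$, which is precisely the interior parabolic paraboloid condition required by Hopf's lemma for uniformly parabolic operators (see, e.g., \cite{friedman1} or \cite{ladyzhenskaya}). Consequently, from the $S_t^{(1)}$ side one obtains $\frac{\partial u_1}{\partial x}(s^*,h(s^*)-0,t)>0$ and from the $S_t^{(2)}$ side $\frac{\partial u_2}{\partial x}(s^*,h(s^*)+0,t)<0$. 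Together with $q_1(s^*)+q_2(s^*)>0$ from \eqref{ineq:ReflectionCoef}, the reflection part of $B_2u$ at $(s^*,h(s^*))$ is strictly positive, while the nonlocal integral part is non-negative because $u(s^*,y,t)\le M=u(s^*,h(s^*),t)$ for every $y\in D_{s^*}$. Therefore $B_2u(s^*,h(s^*),t)>0$, contradicting $B_2u\equiv 0$ and forcing $M\le 0$.

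The main obstacle is the application of Hopf's lemma on a moving membrane of merely H\"older regularity; one cannot cite a classical $C^{1,\alpha}$ boundary version. However, the Hopf exponent being strictly greater than $1/2$ in condition V is precisely what permits an interior parabolic paraboloid at every boundary point, and this is enough for the boundary-point lemma for uniformly parabolic operators with H\"older coefficients; this is, incidentally, the very reason condition V is imposed with this exponent. A subsidiary issue is that the supremum may only be approached rather than attained: one then replaces $u$ by $u(s,x,t)-\varepsilon(t-s)$, which does attain its supremum strictly inside $[0,t)$ for small $\varepsilon>0$ thanks to the exponential decay estimates, runs the argument above for the perturbed function, and lets $\varepsilon\downarrow 0$ at the end.
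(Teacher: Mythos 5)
Your argument is sound in outline but follows a genuinely different route from the paper. The paper does not use the maximum principle here at all: it views each component $\upsilon_i=u^{(1)}_i-u^{(2)}_i$ of the difference of two solutions as the solution of a first boundary value problem in the curvilinear half-strip $S_t^{(i)}$ with boundary data $g_i$ on $x=h(s)$, invokes the known uniqueness and simple-layer-potential representability of that problem (citing Kamynin and Baderko), and thereby forces the densities $V_i$ to solve the homogeneous version of the Volterra system \eqref{eq:SystemVolterraEqII}; the unique solvability of that system, already established in the existence proof, gives $V_i\equiv0$ and hence $\upsilon\equiv0$. Your maximum-principle/Hopf-lemma route is instead essentially the argument the paper deploys later for Lemma \ref{lm:Nonnegativeness} (nonnegativity of $T_{st}$), including the appeal to the Kamynin--Khimchenko boundary-point theorem, which is exactly the tool that makes Hopf's lemma available on a membrane that is only $H^{\frac{1+\alpha}{2}}$ in time. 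What your approach buys is self-containedness and no reliance on representability results for the first boundary value problem; what the paper's approach buys is that it recycles the integral-equation machinery already built and sidesteps the Phragm\'en--Lindel\"of issues on the unbounded spatial domain. (Both proofs, yours and the paper's, tacitly use condition V even though the theorem lists only I--IV; this appears to be an omission in the paper's hypotheses, since the existence proof uses V as well.)

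One step of your write-up does not stand as stated: you justify excluding a supremum at spatial infinity "using the Gaussian bound \eqref{ineq:FSEstimate1} on the fundamental solutions $G_i$ and the boundedness of $u$". But $u$ is an arbitrary bounded continuous solution of the homogeneous problem; it is not known a priori to admit any representation through $G_i$ --- that is precisely what uniqueness is needed to establish --- so this reasoning is circular. The correct tool is a Phragm\'en--Lindel\"of theorem for bounded solutions of uniformly parabolic equations in unbounded domains (a barrier of the form $\varepsilon(1+(x-x_0)^2)e^{\lambda(t-s)}$ suffices under condition I), which shows that $\sup_{\overline{S}_t^{(i)}}\upsilon_i$ is controlled by the suprema over the terminal face $s=t$ (where $\upsilon=0$) and over the membrane. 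Once that is in place, your $\varepsilon(t-s)$ perturbation is unnecessary: the membrane $\{(s,h(s)):0\le s\le t\}$ is compact and the trace of $u$ on it is continuous, so a positive supremum there is attained at some $s^*<t$, and the rest of your sign analysis of $B_2u$ at $(s^*,h(s^*))$ goes through.
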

\begin{proof} Suppose that the problem \eqref{eq:ParabolicEq}-\eqref{eq:FellerWentzell} has two solutions \[u^{(1)}(s,x,t)=u_i^{(1)}(s,x,t),\quad u^{(2)}(s,x,t)=u_i^{(2)}(s,x,t),\quad (s,x)\in \overline{S}_t^{(i)},\,i=1,2,\] which are continuous and bounded in  \(\overline{S}_t\). Then the function \[\upsilon(s,x,t)=u^{(1)}(s,x,t)-u^{(2)}(s,x,t)=\upsilon_i(s,x,t),\quad (s,x)\in \overline{S}_t^{(i)},\,i=1,2,\]
where \(\upsilon_i(s,x,t)=u_i^{(1)}(s,x,t)-u_i^{(2)}(s,x,t),\) is the solution of the homogeneous conjugation problem \eqref{eq:ParabolicEq}-\eqref{eq:FellerWentzell} (with \(\varphi\equiv0\)), and is continuous and bounded in \(\overline{S}_t\). Note that each of the functions \(\upsilon_i,\,i=1,2,\) can be considered, at the same time, as the solution of the following first boundary value problem:
\begin{align}
&\frac{\partial\upsilon_i}{\partial s}+L_s^{(i)}\upsilon_i=0,\quad (s,x)\in S_t^{(i)},\,i=1,2,\label{eq:FirstBoundaryValue1}\\
&\lim\limits_{s\uparrow t}\upsilon_i(s,x,t)=0,\quad x\in \overline{D}_{it},\,i=1,2,\label{eq:FirstBoundaryValue2}\\
&\upsilon_i(s,h(s),t)=g_i(s,t),\quad 0\leq s<t\leq T,\,i=1,2,\label{eq:FirstBoundaryValue3}
\end{align}
where \(g_i(s,t)=\upsilon_{3-i}(s,h(s),t)+B_2\upsilon(s,h(s),t),\,i=1,2\). In view of the fact that \(B_2\upsilon=0\), we deduce that the function \(g_i(s,t)\) is continuous and bounded in \(\overline{S}_t^{(i)},\,i=1,2.\) However, under such conditions (see, e.g., \cite{kamynin4}), the first boundary value problem \eqref{eq:FirstBoundaryValue1}-\eqref{eq:FirstBoundaryValue3} has a unique classical solution which is continuous and bounded in \(\overline{S}_t^{(i)},\,i=1,2,\) and furthermore, can be represented by the formula \eqref{eq:Solution} with \(u_{i0}\equiv0,\,i=1,2\) (see, e.g., \cite{baderko2}). Thus, \(\upsilon_i(s,x,t),\,i=1,2,\) can be uniquely represented in the form \eqref{eq:Solution} with \(u_{i0}\equiv0,\,i=1,2,\) and \(V_i(s,t)\) are continuous in \(s\in[0,t),\) and represented by \(g_i(s,t)\). Further, in view of the considerations given in the proof of Theorem \ref{th:Existence}, it is easy to note that \(V_i(s,t),\,i=1,2,\) is also the solution of the homogeneous system of integral equations (\ref{eq:SystemVolterraEqII}) with \(\Psi_i(s,t)\equiv 0,\,i=1,2\). Because of the uniqueness of the solution of (\ref{eq:SystemVolterraEqII}) in the class of the continuous functions under consideration, we have \(V_i(s,t)\equiv0\,(i=1,2)\) and, hence, we get that \(\upsilon_i(s,x,t)\equiv0,\,i=1,2\). Therefore, \(u^{(1)}(s,x,t)=u^{(2)}(s,x,t)\) and the proof is complete.
\end{proof}

\section{Construction of the Markov process}

Theorem \ref{th:Existence} allows us to define a two-parameter family of operators \((T_{st})_{0\leq s\leq t\leq T}\) in \(C_b(\mathbb{R})\) by using the solution \(u(s,x,t)\) of the problem \eqref{eq:ParabolicEq}-\eqref{eq:FellerWentzell}. For \(0\leq s<t\leq T,\,x\in\mathbb{R}\) and \(\varphi\in C_b(\mathbb{R}),\) we put
\begin{equation}\label{eq:Semigroup}
T_{st}\varphi(x)=T_{st}^{(i0)}\varphi(x)+T_{st}^{(i1)}\varphi(x),\quad (s,x)\in [0,t)\times \overline{D}_{is},\,i=1,2,
\end{equation}
where \(T_{st}^{(i0)}\varphi(x)=u_{i0}(s,x,t),\,T_{st}^{(i1)}\varphi(x)=u_{i1}(s,x,t),\,i=1,2,\) the functions \(u_{i0}\) and \(u_{i1},\,i=1,2,\) are defined by the formulas \eqref{eq:Poisson} and \eqref{eq:Simple-layer}, respectively, and the densities \(V_i(s,t)\equiv V_i(s,t,\varphi),\,i=1,2,\) which are included in the simple-layer potentials \(u_{i1},\) represent the solution of the system of singular integral equations \eqref{eq:SystemVolterraEqII} to which the problem \eqref{eq:ParabolicEq}-\eqref{eq:FellerWentzell} is reduced. In addition, \(T_{tt}=I,\) where \(I\) is the identity operator, and for \(T_{st}\varphi(x),\) the estimate \eqref{ineq:ui1} holds for all \(0\leq s\leq t\leq T,\,x\in\mathbb{R}\).

The presence of the integral representation for the family of operators \((T_{st})_{0\leq s\leq t\leq T}\) allows us to to verify easily the following conditions:
\begin{enumerate}
\item[1)] if \(\varphi_n\in C_b(\mathbb{R}),\,n\in\mathbb{N},\,\sup\limits_{n}\|\varphi_n\|<\infty\) and for all \(x\in\mathbb{R},\,\lim\limits_{n\rightarrow\infty}\varphi_n(x)=\varphi(x)\), where \(\varphi\in C_b(\mathbb{R})\), then for all \((s,x)\in \overline{S}_t,\)
\[
\lim\limits_{n\rightarrow\infty}T_{st}\varphi_n(x)=T_{st}\varphi(x);
\]
\item[2)] for all \(0\leq s\leq\tau\leq t\leq T,\)
\[T_{st}=T_{s\tau}T_{\tau t};\]
\item[3)] \(T_{st}\varphi(x)\geq 0\) for all \((s,x)\in \overline{S}_t\) if \(\varphi\in C_b(\mathbb{R})\) and \(\varphi(x)\geq 0\);
\item[4)] The operators \(T_{st}\) are contractive, i.e., they do not increase the norm of element.
\end{enumerate}

Let us get down to proving these properties. The first one follows from the Lebesque bounded convergence theorem and the relation \(\lim\limits_{n\rightarrow\infty}V_i(s,t,\varphi_n)=V_i(s,t,\varphi),\,s\in[0,t),\,i=1,2,\) which holds for the solution of the system of integral equations \eqref{eq:SystemVolterraEqII}. The second property, which means that the family of operators \((T_{st})\) is a two-parameter semigroup, follows from the assertion of Theorem \ref{th:Uniqueness} on uniqueness of the solution of the problem \eqref{eq:ParabolicEq}-\eqref{eq:FellerWentzell}. Indeed, to find \(u(s,x,t)\) when \(u(t,x,t)=\varphi(x),\) one can do the following: solve the equation in the time interval \([\tau,t]\) and then solve it in the time interval \([s,\tau]\) starting with \(u(\tau,x,t)\) which was obtained; in other words, \(T_{st}\varphi=T_{s\tau}(T_{\tau t}\varphi),\,\varphi\in C_b(\mathbb{R})\), or \(T_{st}=T_{s\tau}T_{\tau t}\).

We now prove that the operators \(T_{st}\) remain a cone of nonnegative functions invariant.
\begin{lem}[Nonnegativeness]\label{lm:Nonnegativeness} If \(\varphi\in C_b(\mathbb{R})\) and \(\varphi(x)\geq 0\) for all \(x\in\mathbb{R}\), then  \(T_{st}\varphi(x)\geq 0\) for all \(0\leq s\leq t\leq T,\,x\in\mathbb{R}.\)
\end{lem}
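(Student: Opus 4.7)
Set $u(s,x,t) := T_{st}\varphi(x)$. By Theorems \ref{th:Existence} and \ref{th:Uniqueness}, $u$ is the unique bounded continuous function on $\overline{S}_t$ solving \eqref{eq:ParabolicEq}-\eqref{eq:FellerWentzell} with initial data $\varphi \geq 0$. The plan is to assume $m := \inf_{\overline{S}_t} u(\cdot,\cdot,t) < 0$ and derive a contradiction by combining the parabolic minimum principle inside each $S_t^{(i)}$ with the Hopf boundary-point lemma and the nonlocal conjugation condition \eqref{eq:FellerWentzell} at the moving membrane $\{x = h(s)\}$. The case $s=t$ is trivial since $T_{tt}=I$.

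Since $u$ is bounded and each $u_i$ solves a uniformly parabolic equation with bounded H\"older coefficients, the Phragm\'en--Lindel\"of-type maximum principle for bounded solutions shows that $\inf_{\overline{S}_t^{(i)}} u_i$ is attained on the parabolic boundary, i.e., either on $\{s=t\}$ (where $u=\varphi\geq 0 > m$) or at some point $(s_0,h(s_0))$ with $0\leq s_0 < t$. By the strong parabolic minimum principle and the fact that $u_i\not\equiv m$ (since $u_i=\varphi\geq 0$ at $s=t$), $m$ cannot be attained in the open set $S_t^{(i)}$. Hence the negative infimum must be realized at a point $(s_0,h(s_0))$ on the membrane; combining with \eqref{eq:FellerProperty}, one gets $u_1(s_0,h(s_0),t)=u_2(s_0,h(s_0),t)=m$, and since $m$ is the global infimum both $u_1$ and $u_2$ attain their infimum there, from the left and the right respectively.

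Since $h\in H^{(1+\alpha)/2}([0,T])$ by condition V, the classical parabolic Hopf boundary-point lemma applies in each half-strip and yields the strict one-sided inequalities
\begin{equation}\nonumber
\frac{\partial u_1}{\partial x}(s_0,h(s_0)-0,t)<0,\qquad \frac{\partial u_2}{\partial x}(s_0,h(s_0)+0,t)>0.
\end{equation}
Substituting these into $B_2u$ at $(s_0,h(s_0))$ (cf.\ \eqref{eq:FellerWentzell}): the differential part $q_1(s_0)\frac{\partial u_1}{\partial x}-q_2(s_0)\frac{\partial u_2}{\partial x}$ is nonpositive, and by \eqref{ineq:ReflectionCoef} at least one of $q_1(s_0),q_2(s_0)$ is strictly positive, so this part is strictly negative. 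The nonlocal term equals $\int_{D_{s_0}}[m-u(s_0,y,t)]\,\mu(s_0,dy)\leq 0$ because $m$ is the global infimum. Summing, $B_2u(s_0,h(s_0),t)<0$, contradicting \eqref{eq:FellerWentzell}. Therefore $m\geq 0$ and $T_{st}\varphi\geq 0$ on $\overline{S}_t$.

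The main technical obstacle is justifying that the infimum of the bounded function $u_i$ on the unbounded half-strip $\overline{S}_t^{(i)}$ is actually attained at a finite point rather than only approached as $|x|\to\infty$. This is handled by the standard Phragm\'en--Lindel\"of argument for bounded solutions of uniformly parabolic equations with bounded coefficients: one perturbs $u_i$ by $\varepsilon[(t-s)+\psi(x)]$ with a suitable slowly growing $\psi$, localizes the infimum of the perturbed function, and then lets $\varepsilon\to 0$. The only other delicate point, applicability of the Hopf lemma at a merely $H^{(1+\alpha)/2}$ lateral boundary, is classical in view of condition V (an interior parabolic ball condition holds uniformly in the $(s,x)$-plane), together with the regularity of $u_i$ up to the membrane provided by the representation \eqref{eq:Solution}.
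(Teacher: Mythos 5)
Your proof is correct and follows the same strategy as the paper's: push the hypothetical negative minimum onto the membrane by the minimum principle, obtain \emph{strict} one-sided derivative inequalities at the minimum point by a parabolic boundary-point (Hopf) lemma, observe that the nonlocal term is nonpositive there, and contradict \eqref{eq:FellerWentzell} using \eqref{ineq:ReflectionCoef}. The paper does exactly this, citing Theorem 1 of Kamynin--Khimchenko for the strict inequalities. The one place you genuinely diverge is the localization of the infimum: the paper first reduces to compactly supported $\varphi\geq 0$ via the continuity property 1) of $(T_{st})$, so that $T_{st}\varphi(x)\to 0$ as $x\to\pm\infty$ and the minimum over $[0,t]\times\mathbb{R}$ is attained outright; you instead keep a general bounded $\varphi$ and invoke a Phragm\'en--Lindel\"of argument (plus, implicitly, compactness of the curve $\{(s,h(s)):s\in[0,t]\}$ to get attainment on the lateral boundary). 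Both routes work; the paper's reduction is the cleaner of the two because it avoids constructing barriers in an unbounded curvilinear domain, at the mild cost of using the already-established property 1). One small imprecision worth fixing: a curve in $H^{\frac{1+\alpha}{2}}([0,T])$ need not be Lipschitz, so an ``interior parabolic ball condition'' in the Euclidean sense does not hold; what saves the Hopf step is that the H\"older exponent exceeds $\tfrac12$, which is precisely the hypothesis of the Kamynin--Khimchenko boundary-point theorem that the paper cites, and the existence of the one-sided derivatives themselves comes from the jump relation \eqref{eq:JumpFormula} applied to the representation \eqref{eq:Solution}, as you indicate.
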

\begin{proof}[Proof of Lemma \ref{lm:Nonnegativeness}] First we note that in view of the property 1), it suffices to consider the case when the nonnegative function \(\varphi\in C_b(\mathbb{R})\) has a compact support. From this and from Theorem \ref{th:Existence}, it follows that for such \(\varphi\), the function \(T_{st}\varphi(x)\) representing the solution of the problem \eqref{eq:ParabolicEq}-\eqref{eq:FellerWentzell} is bounded and continuous in the domain \((s,x)\in [0,t)\times\mathbb{R}\) and, furthermore, \(\lim\limits_{x\rightarrow\pm\infty} T_{st}\varphi(x)=0\) when \(0\leq s\leq t\leq T\). Next, if \(\varphi\equiv 0\), then the assertion of the lemma is obvious. We therefore suppose that the function \(\varphi\) is non-zero. Denote by \(\gamma\) the minimum of the function \(T_{st}\varphi(x)\) in the domain \((s,x)\in[0,t]\times\mathbb{R}\) and assume that \(\gamma<0\). According to the maximum principle for parabolic equations (see \cite[Ch.2]{friedman1}), we have that under our assumptions, the value \(\gamma\) is attained on the common boundary of the domains \(S_t^{(i)},\,i=1,2,\) i.e., when \((s,x)\in(0,t)\times\{h(s)\}\). Let \(s=s_0\) and \(x=h(s_0)\) for which \(T_{s_0 t}\varphi(x_0)=\gamma\). Then, for \(s=s_0,\) the following inequalities hold:
\begin{equation}
\begin{split}\label{ineq:Lemma}
&\frac{\partial T_{s_0t}\varphi}{\partial x}(x_0-0)\leq 0,\quad \frac{\partial T_{s_0t}\varphi}{\partial x}(x_0+0)\geq 0,\\
&\int\limits_{D_s}[T_{s_0t}\varphi(x_0)-T_{s_0t}\varphi(y)]\mu(s_0,dy)\leq 0.
\end{split}
\end{equation}
Moreover, it follows from Theorem 1 in \cite{kamynin3} (cf. Theorem 14 in \cite[Ch.II, \S4]{friedman1}) that in the first two estimates in \eqref{ineq:Lemma}, the equal signs should be excluded. If we recall, at the same time, the assumption IV on the parameters \(q_i,\,i=1,2,\) in the conjugation condition \eqref{eq:FellerWentzell}, it becomes clear that in the case of \(s=s_0\), \eqref{eq:FellerWentzell} cannot hold. The contradiction we arrived at indicates that \(\gamma\geq 0\). This completes the proof of the lemma.
\end{proof}

Finally, the property 4) of the operators \(T_{st}\) is an easy consequence of the property 3) and the following obvious relation:
\[T_{st}\varphi_0(x)\equiv1,\quad 0\leq s\leq t\leq T,\,x\in\mathbb{R}\quad\text{if}\quad\varphi_0\equiv1,\,x\in\mathbb{R}.\]

From the properties 1)-4) if follows (see, e.g., \cite[Ch.II, \S1]{dynkin}, \cite[Ch.I]{portenko2}) that the family of operators \((T_{st})_{0\leq s\leq t\leq T}\) is a semigroup associated with some inhomogeneous Feller process on \(\mathbb{R}\). If we denote by \(P(s,x,t,dy)\) its transition function, then \(T_{st}\varphi(x)\) can be represented as
\begin{equation}\label{eq:SemigroupAndTransitionFunc}
T_{st}\varphi(x)=\int\limits_{\mathbb{R}}\varphi(y)P(s,x,t,dy).
\end{equation}

In view of \eqref{eq:Series}, \eqref{eq:Semigroup} and \eqref{ineq:FSEstimate1}-\eqref{eq:FSadditional3}, we also find by direct calculation that for any \(s\in[0,T),\,\varphi\in C_b^2(\mathbb{R})\) and \(f\in C_0(\mathbb{R})\) (here \(C_0(\mathbb{R})\) denotes the set of all real continuous functions defined on \(\mathbb{R}\) with compact support) the transition function \(P(s,x,t,dy)\) satisfies the following relation:
\begin{equation}\label{eq:WeakGenerator}
\begin{split}
&\lim\limits_{t\downarrow s}\int\limits_{\mathbb{R}}f(x)\bigg[\frac{1}{t-s}(T_{st}\varphi(x)-\varphi(x))\bigg]dx=\int\limits_{\mathbb{R}}f(x)L_s\varphi(x)dx+\\
&+\frac{1}{2}(d_1(s)+d_2(s))\bigg[(q_2(s)-q_1(s))\varphi'(h(s))+\int\limits_{D_s}(\varphi(y)-\varphi(h(s)))\mu(s,dy)\bigg]f(h(s)).
\end{split}
\end{equation}
Here,
\begin{align}
&\label{eq:Ls}L_s\varphi(x)=\begin{cases}&L_s^{(i)}\varphi(x)\quad\text{for}\quad s\in[0,T],\,x\in D_{is},\,i=1,2,\\
&\sum\limits_{j=1}^2l_j(s)L_s^{(j)}\varphi(x)\quad\text{for}\quad s\in[0,T],\,x=h(s),\end{cases}\nonumber\\
&l_j(s)=\frac{q_j(s)\sqrt{b_{3-j}(s,h(s))}}{q_1(s)\sqrt{b_{2}(s,h(s))}+q_2(s)\sqrt{b_{1}(s,h(s))}},\quad j=1,2,\nonumber\\
&l_1(s)+l_2(s)=1.\nonumber
\end{align}

The right-hand side of \eqref{eq:WeakGenerator} represents the so-called weak (generalized) infinitesimal generator of the Markov process constructed above (cf. \cite{bogdan,xie}). Under our assumptions, the second term in \eqref{eq:WeakGenerator} can be reduced to
\begin{equation}
\begin{split}\label{eq:SecondTermWG}
&\frac{1}{2}(d_1(s)+d_2(s))\bigg[\varphi'(h(s))\bigg(q_2(s)-q_1(s)+\int\limits_{D_s^{\delta}}(y-h(s))\mu(s,dy)\bigg)+\\
&+\frac{1}{2}\varphi''(h(s))\int\limits_{D_s^{\delta}}(y-h(s))^2\mu(s,dy)+\int\limits_{D_s^{\delta}}\bigg(\varphi(y)-\varphi(h(s))-\varphi'(h(s))(y-h(s))-\\
&-\frac{1}{2}\varphi''(h(s))(y-h(s))^2\bigg)\mu(s,dy)+\int\limits_{D_s\setminus D_s^{\delta}}(\varphi(y)-\varphi(h(s)))\mu(s,dy)\bigg]f(h(s)).
\end{split}
\end{equation}

If we additionally assume the existence of the moments
\[
m_k(s)=\int\limits_{D_s}(y-h(s))^k\mu(s,dy)\in C[0,T],\quad k=1,2,
\]
for the measure \(\mu\), then \eqref{eq:SecondTermWG} can be rewritten as
\begin{equation}\label{eq:RewrittenSecondTermWG}
\begin{split}
&\frac{1}{2}(d_1(s)+d_2(s))\bigg[(q_2(s)-q_1(s)+m_1(s))\varphi'(h(s))+\frac{1}{2}m_2(s)\varphi''(h(s))+\\
&+\int\limits_{D_s}(\varphi(y)-\varphi(h(s))-\varphi'(h(s))(y-h(s))-\frac{1}{2}\varphi''(h(s))(y-h(s))^2)\mu(s,dy)]f(h(s)).
\end{split}
\end{equation}

Analyzing \eqref{eq:WeakGenerator} by taking into account \eqref{eq:SecondTermWG}, \eqref{eq:RewrittenSecondTermWG}, we see how the parameters in the Feller-Wentzell conjugation condition \eqref{eq:FellerWentzell} influence the different components of the bias of the diffusing particle (i.e., the drift, the diffusion and the random jump component) after it reaches the membrane. In the special case when \(\mu(s,D_s)\equiv0,\,s\in[0,T],\) the corresponding Markov process can be treated as the generalized diffusion process in the sense of the definition given in \cite[Ch.III, \S1]{portenko1} (see also \cite[Ch.I, \S2]{portenko2}). This means that when \(\mu\equiv0,\) the sample paths of this process are continuous and the diffusion coefficient equals
\begin{equation}\label{eq:DiffusionCoef}
b(s,x)=\begin{cases}b_i(s,x)\quad\text{for}\quad s\in[0,T],\,x\in D_{is},\,i=1,2,\\
\sum\limits_{j=1}^2l_j(s)b_j(s,h(s))\quad\text{for}\quad s\in[0,T],\,x=h(s),\end{cases}
\end{equation}
and the drift coefficient is the generalized function of the form
\[
a(s,x)+a_0(s)\delta(x-h(s)),
\]
where
\begin{align}
&\label{eq:DriftCoef1}a(s,x)=\begin{cases}a_i(s,x)\quad\text{for}\quad s\in[0,T],\,x\in D_{is},\,i=1,2,\\
\sum\limits_{j=1}^2l_j(s)a_j(s,h(s))\quad\text{for}\quad s\in[0,T],\,x=h(s),\end{cases}\\
&\label{eq:DriftCoef2}a_0(s)=\frac{1}{2}(d_1(s)+d_2(s))(q_2(s)-q_1(s)),
\end{align}
\(\delta(x-h(s))\) is the Dirac delta function concentrated at the point \(x=h(s)\).

The rigorous justification of the last assertion is based on direct verification of the following properties of the transition function \(P(s,x,t,dy)\) (in the case \(\mu\equiv0\)) :
\begin{enumerate}
\item[a)] for all \(s\in[0,T),\,x\in\mathbb{R},\)
\[\lim\limits_{t\downarrow s}\frac{1}{t-s}\int\limits_{\mathbb{R}}|y-x|^4P(s,x,t,dy)=0;\]
\item[b)] for every \(s\in[0,T)\) and any function \(f\in C_0(\mathbb{R})\),
\begin{align}
&\lim\limits_{t\downarrow s}\int\limits_{\mathbb{R}}f(x)\bigg[\frac{1}{t-s}\int\limits_{\mathbb{R}}(y-x)P(s,x,t,dy)\bigg]dx=\int\limits_{\mathbb{R}}a(s,x)f(x)dx+a_0(s)f(h(s)),\nonumber\\
&\lim\limits_{t\downarrow s}\int\limits_{\mathbb{R}}f(x)\bigg[\frac{1}{t-s}\int\limits_{\mathbb{R}}(y-x)^2P(s,x,t,dy)\bigg]dx=\int\limits_{\mathbb{R}}b(s,x)f(x)dx,\nonumber
\end{align}
\end{enumerate}
where the functions \(b(s,x),\,a(s,x)\) and \(a_0(s,x)\) are defined by the formulas \eqref{eq:DiffusionCoef}, \eqref{eq:DriftCoef1} and \eqref{eq:DriftCoef2}, respectively.

Finally, note that using the right-hand side of \eqref{eq:WeakGenerator}, one can also find the ordinary weak infinitesimal generator of the constructed Markov process (see \cite[Ch.V]{dynkin}, \cite[Ch.II]{friedman2}). This generator (we denote it by \(A_s\)) is defined on functions \(\varphi\in C_b(\mathbb{R}^2)\) such that
\begin{align}
&L_s^{(1)}\varphi(h(s))=L_s^{(2)}\varphi(h(s)),\nonumber\\
&(q_2(s)-q_1(s))\varphi'(s)+\int\limits_{D_s}(\varphi(y)-\varphi(h(s)))\mu(s,dy)=0
\end{align}
and, for them, it holds,
\[
A_s\varphi(x)=\lim\limits_{t\downarrow s}\frac{1}{t-s}(T_{st}\varphi(x)-\varphi(x))=L_s\varphi(x)
\]
for all \(x\in\mathbb{R},\, 0\leq s<T\).

Such a result is obviously expected. It is connected directly with the very formulation of the parabolic conjugation problem \eqref{eq:ParabolicEq}-\eqref{eq:FellerWentzell} which we consider (see the introductory part of this paper as well as the corresponding comments in Section 1).

Thus, the following theorem is a conclusion of the second part of our research:
\begin{thm}[Construction of the process]\label{th:ProcessConstruction}
Let the conditions of Theorem \ref{th:Existence} be satisfied. Then the two-parameter family of the operators \((T_{st})_{0\leq s\leq t\leq T}\) defined by \eqref{eq:Semigroup} is the semigroup associated with the inhomogeneous Markov process on \(\mathbb{R},\) the transition function  \(P(s,x,t,dy)\) of which satisfies the relation \eqref{eq:WeakGenerator}.
\end{thm}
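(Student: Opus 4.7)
The plan is to assemble the components already prepared in Section 3. The theorem makes two assertions: that $(T_{st})_{0\le s\le t\le T}$ is a Feller semigroup associated with an inhomogeneous Markov process on $\mathbb{R}$, and that its transition function satisfies \eqref{eq:WeakGenerator}. I would verify them in this order.

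First, I would check the four properties 1)--4) listed after \eqref{eq:Semigroup}. Property 1) follows from dominated convergence in \eqref{eq:Poisson} and \eqref{eq:Simple-layer}, once one notes that the system \eqref{eq:SystemVolterraEqII} is linear in $\varphi$ through $\Psi_i(s,t,\varphi)$, and that the majorant \eqref{ineq:Vi} is uniform on any set $\{\varphi : \|\varphi\| \le M\}$, so $V_i(s,t,\varphi_n)\to V_i(s,t,\varphi)$ pointwise on $[0,t)$ by term-by-term passage to the limit in the series \eqref{eq:Series}. Property 2) follows from Theorem \ref{th:Uniqueness}: both $T_{st}\varphi$ and $T_{s\tau}(T_{\tau t}\varphi)$, viewed as functions of $(s,x)$ on $[0,\tau]\times\mathbb{R}$, solve \eqref{eq:ParabolicEq}--\eqref{eq:FellerWentzell} with the same data $T_{\tau t}\varphi$ at $s=\tau$, hence coincide. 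Property 3) is Lemma \ref{lm:Nonnegativeness}. Property 4) follows from 3) together with the observation that $u\equiv 1$ solves \eqref{eq:ParabolicEq}--\eqref{eq:FellerWentzell} with $\varphi\equiv 1$, so $T_{st}\mathbf{1}\equiv\mathbf{1}$ by uniqueness, and sandwiching $-\|\varphi\|\mathbf{1}\le\varphi\le\|\varphi\|\mathbf{1}$ with positivity yields $\|T_{st}\varphi\|\le\|\varphi\|$.

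With 1)--4) in hand, the standard construction theorem for two-parameter Feller semigroups (see \cite[Ch.II, \S1]{dynkin}, \cite[Ch.I]{portenko2}) produces an inhomogeneous Markov process on $\mathbb{R}$ whose transition function $P(s,x,t,dy)$ is related to $(T_{st})$ by \eqref{eq:SemigroupAndTransitionFunc}.

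Finally, to establish \eqref{eq:WeakGenerator} for $\varphi\in C_b^2(\mathbb{R})$ and $f\in C_0(\mathbb{R})$, I would decompose $T_{st}\varphi(x)-\varphi(x)=(T_{st}^{(i0)}\varphi(x)-\varphi(x))+T_{st}^{(i1)}\varphi(x)$ on each half-plane. For the Poisson part, Taylor expansion of $\varphi$ combined with the moment identities \eqref{eq:FSadditional1}--\eqref{eq:FSadditional3} gives the pointwise limit $L_s^{(i)}\varphi(x)$ on $D_{is}$, and integrating against $f$ via dominated convergence yields $\int_{\mathbb{R}} f(x)L_s\varphi(x)\,dx$. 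The main obstacle is the simple-layer contribution: from \eqref{eq:Series}, $V_i(s,t)=\Psi_i(s,t)+O((t-s)^{-\frac{1}{2}+\frac{\alpha}{2}})$; inserting this into \eqref{eq:Simple-layer} and using the Gaussian scaling of $G_i(s,\cdot,\tau,h(\tau))$, one must show that $(t-s)^{-1}\int_{\mathbb{R}} f(x)T_{st}^{(i1)}\varphi(x)\,dx$ collapses to $f(h(s))$ times the bracketed boundary functional in \eqref{eq:WeakGenerator}, with the prefactor $\tfrac{1}{2}(d_1(s)+d_2(s))$ emerging from the explicit form of $d_i(s)$ appearing after \eqref{eq:SystemVolterraEqII} and of $\Psi_i$. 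Carrying out this limit, including the control of higher-order terms in the series and the careful handling of the $\mu$-integrals split into $D_s^{\delta}$ and $D_s\setminus D_s^{\delta}$ as in the proof of Theorem \ref{th:Existence}, constitutes the principal technical effort.
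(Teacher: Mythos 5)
Your proposal follows the same route as the paper: verifying properties 1)--4) exactly as the text does (dominated convergence plus $V_i(s,t,\varphi_n)\to V_i(s,t,\varphi)$ for 1), uniqueness for 2), Lemma \ref{lm:Nonnegativeness} for 3), positivity plus $T_{st}\mathbf{1}=\mathbf{1}$ for 4)), invoking the standard construction of the Feller process, and then obtaining \eqref{eq:WeakGenerator} by the Poisson/simple-layer decomposition with Taylor expansion and the moment identities \eqref{eq:FSadditional1}--\eqref{eq:FSadditional3}. Your sketch of the ``direct calculation'' for the weak generator is consistent with what the paper leaves implicit, so the proposal is correct and essentially identical in approach.
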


%%%%%%%%%%%%%%%%%%%%%%%%%%%%%%%%%%%%%%%%%%%%%%%%%%%%%%%%%%%%%%%%%%%
%%                                                               %%
%% Use the two commands below for producing your bibliography    %%
%% with bibtex, then comment again the commands and include the  %%
%% content of the .bbl file in this file below the commands.     %%
%%                                                               %%
%%%%%%%%%%%%%%%%%%%%%%%%%%%%%%%%%%%%%%%%%%%%%%%%%%%%%%%%%%%%%%%%%%%

%\bibliographystyle{amsplain}
%\bibliography{yourbibfilename}

\begin{thebibliography}{99}

\bibitem{anulova} Anulova, S. V.: Diffusion processes: discontinuous coefficients, degenerate diffusion, randomized drift. (Russian) \emph{Dokl. Akad. Nauk SSSR} \textbf{260} (1981), no. 5, 1036--1040. MR-0635821

\bibitem{baderko1} Baderko, E. A.: Solution of a problem with an oblique derivative for a parabolic equation by the method of boundary integral equations. (Russian) \emph{Differentsial'nye Uravneniya} \textbf{25} (1989), no. 1, 14--20, 179; translation in \emph{Differential Equations} \textbf{25}, (1989), no. 1, 9--14. MR-0986391

\bibitem{baderko2} Baderko, E. A.: Boundary value problems for a parabolic equation, and boundary integral equations. (Russian) \emph{Differentsial'nye Uravneniya} \textbf{28} (1992), no. 1, 17--23, 180; translation in \emph{Differential Equations} \textbf{28}, (1992), no. 1, 15--20. MR-1257483

\bibitem{bogdan} Bogdan, K. and Jakubowski, T.: Estimates of heat kernel of fractional Laplacian perturbed by gradient operators. \emph{Comm. Math. Phys.} \textbf{271} (2007), no. 1, 179--198. MR-2283957

\bibitem{dynkin} Dynkin, E. B.: Markov processes. (Russian) \emph{Gosudarstv. Izdat. Fiz.-Mat. Lit.}, Moscow, 1963. 859 pp. MR-0193670. English translation in two volumes, \emph{Springer}, Berlin, 1965. Vol. I: xii+365 pp., Vol. II: viii+274 pp. MR-0193671

\bibitem{engelbert} Engelbert, H. J. and Schmidt, W.: Strong Markov continuous local martingales and solutions of one-dimensional stochastic differential equations. III. \emph{Math. Nachr.} \textbf{151} (1991), 149--197. MR-1121203

\bibitem{etore} \'{E}tor\'{e}, P. and Martinez, M: Time inhomogeneous stochastic differential equations involving the local time of the unknown process, and associated parabolic operators. \emph{Stochastic Process. Appl.} \textbf{128} (2018), no. 8, 2642--2687. MR-3811700

\bibitem{feller} Feller, W.: The parabolic differential equations and the associated semi-groups of transformations. \emph{Ann. of Math. (2)} \textbf{55}, (1952), 468--519. MR-0047886

\bibitem{friedman1} Friedman, A.: Partial differential equations of parabolic type. \emph{Prentice-Hall, Englewood Cliffs}, New Jersey, 1964. xiv+347 pp. MR-0181836

\bibitem{friedman2} Friedman, A.: Stochastic differential equations and applications. Vol. I. Probability and Mathematical Statistics \textbf{28}, \emph{Academic Press}, New York-London, 1975. xiii+231 pp. MR-0494490

\bibitem{ikeda} Ikeda, N. and Watanabe S.: Stochastic differential equations and diffusion processes. North-Holland Mathematical Library \textbf{24}. \emph{North-Holland Publishing Co.}, Amsterdam-New York; \emph{Kodansha, Ltd.}, Tokyo, 1981. xiv+464 pp. MR-0637061

\bibitem{iljin} Ilyin, A. M., Kalashnikov, A. S. and Oleynik, O. A.: Second-order linear equations of parabolic type. (Russian) \emph{Uspehi Mat. Nauk} \textbf{17} (1962), no. 3 (105), 3--146. MR-0138888

\bibitem{kamynin1} Kamynin, L. I.: A boundary-value problem in the theory of heat conduction with non-classical boundary conditions. (Russian) \emph{Z. Vycisl. Mat. i Mat. Fiz.} \textbf{4} (1964), 1006--1024. MR-0171085

\bibitem{kamynin2} Kamynin, L. I.: The existence of a solution of boundary-value problems for a parabolic equation with discontinuous coefficients. (Russian) \emph{Izv. Akad. Nauk SSSR Ser. Mat.} \textbf{28} (1964), 721--744. MR-0165245

\bibitem{kamynin3} Kamynin, L. I. and Khimchenko, B. N.: Applications of the maximum principle to second order parabolic equations. (Russian) \emph{Dokl. Akad. Nauk SSSR} \textbf{204} (1972), 529--532. MR-0312075

\bibitem{kamynin4} Kamynin, L. I.: Uniqueness of the solution of the first boundary value problem in an unbounded domain for a second-order parabolic equation. (Russian) \emph{Zh. Vychisl. Mat. i Mat. Fiz.} \textbf{24} (1984), no. 9, 1331--1345. MR-0764205

\bibitem{kopytko1} Kopytko, B. I. and Portenko, M. I.: The problem of pasting together two diffusion processes and classical potentials. \emph{Theory Stoch. Process.} \textbf{15} (2009), no. 2, 126--139. MR-2598532

\bibitem{kopytko2} Kopytko, B. I. and Shevchuk, R. V.: On pasting together two inhomogeneous diffusion processes on a line with the general Feller-Wentzell conjugation condition. \emph{Theory Stoch. Process.} \textbf{17} (2011), no. 2, 55--70. MR-2934559

\bibitem{kopytko3} Kopytko, B. I. and Shevchuk, R. V.: On Feller semigroups associated with one-dimensional diffusion processes with membranes. \emph{Theory Stoch. Process.} \textbf{21} (2016), no. 1, 31--44. MR-3571410

\bibitem{kulik} Kulik, A. M.: On the solution of a one-dimensional stochastic differential equation with singular drift coefficient. (Russian); translated from Ukrain. Mat. Zh. \textbf{56} (2004), no. 5, 642--655, Ukrainian Math. J. \textbf{56} (2004), no. 5, 774--789. MR-2106694

\bibitem{kulinich} Kulinich, G. L.: The limit distribution behavior of the solution of a stochastic diffusion equation. (Russian) \emph{Teor. Verojatnost. i Primenen.} \textbf{12} (1967), 548--551. MR-0215365

\bibitem{ladyzhenskaya} Ladyzhenskaja, O. A., Solonnikov, V. A. and Ural'ceva, N. N.: Linear and quasilinear equations of parabolic type. (Russian) Translated from the Russian by S. Smith. Translations of Mathematical Monographs \textbf{23}, \emph{American Mathematical Society, Providence, R.I.}, 1968. xi+648 pp. MR-0241822

\bibitem{langer} Langer, H. and Schenk, W.: Knotting of one-dimensional Feller processes. \emph{Math. Nachr.} \textbf{113} (1983), 151--161. MR-0725484

\bibitem{lejay} Lejay, A: The snapping out Brownian motion. \emph{Ann. Appl. Probab.} \textbf{26} (2016), no. 3, 1727--1742. MR-3513604

\bibitem{mastrangelo} Mastrangelo, M. and Dehen D: Op\'{e}rateurs diff\'{e}rentiels paraboliques \`{a} coefficients continus par morceaux et admettant un drift g\'{e}n\'{e}ralis\'{e}. (French) \emph{Bull. Sci. Math.} \textbf{116} (1992), no. 1, 67--93. MR-1154373

\bibitem{pilipenko} Pilipenko, A. Yu.: On the Skorokhod mapping for equations with reflection and possible jump-like exit from a boundary. \emph{Ukrainian Math. J.} \textbf{63} (2012), no. 9, 1415--1432. MR-3109663

\bibitem{pogorzelski1} Pogorzelski, W.: \'{E}tude de la solution fondamentale de l'\'{e}quation parabolique. (French) \emph{Ricerche Mat.} \textbf{5} (1956), 25--57. MR-0079198

\bibitem{pogorzelski2} Pogorzelski, W.: R\'{o}wnania ca\l kowe i ich zastosowania. Tom IV. (Polish) \emph{PWN-Polish Scientific Publishers}, Warsaw, 1970.

\bibitem{petruk} Petruk, O. and Kopytko, B.: Time-dependent shock acceleration of particles. Effect of the time-dependent injection, with application to supernova remnants. \emph{Monthly Notices of the Royal Astronomical Society} \textbf{462} (2016), no. 3, 3104--3114 pp. https://doi.org/10.1093/mnras/stw1851

\bibitem{portenko1} Portenko, M. I.: Generalized diffusion processes. Translated from the Russian by H. H. McFaden. Translations of Mathematical Monographs \textbf{83}, \emph{American Mathematical Society, Providence, R.I.}, 1990. x+180 pp. MR-1104660

\bibitem{portenko2} Portenko, M. I.: Diffusion processes in media with membranes. (Ukrainian) \emph{Proceedings of the Institute of Mathematics of the National Academy of Sciences of Ukraine} \textbf{10}, Kyiv, 1995. 200 pp. MR-1356720

\bibitem{shevchuk} Shevchuk, R. V.: Pasting of two one-dimensional diffusion processes. \emph{Ann. Math. Inform.} \textbf{39} (2012), 225--239. MR-2959890

\bibitem{skorokhod} Skorokhod, A. V.: Stochastic equations for diffusion processes with a boundary. (Russian) \emph{Teor. Verojatnost. i Primenen.} \textbf{6} (1961), 287--298. MR-0145598

\bibitem{skubachevskii} Skubachevskii, A. L.: Nonlocal elliptic problems and multidimensional diffusion processes. \emph{Russian J. Math. Phys.} \textbf{3} (1995), no. 3, 327--360. MR-1370628

\bibitem{taira} Taira, K: On the existence of Feller semigroups with boundary conditions. \emph{Mem. Amer. Math. Soc.} \textbf{99} (1992), no. 475, viii+65 pp. MR-1120243

\bibitem{walsh} Walsh, J. B.: A diffusion with a discontinuous local time. \emph{In Temps locaux. Soci\'{e}te Math\'{e}matique de France, Ast\'{e}risque} \textbf{52--53} (1978), 37--45. MR-0509476

\bibitem{wentzell1} Wentzell (Ventcel'), A. D.: Semigroups of operators that correspond to a generalized differential operator of second order. (Russian) \emph{Dokl. Akad. Nauk SSSR (N.S.)} \textbf{111} (1956), 269--272. MR-0092085

\bibitem{wentzell2} Wentzell (Ventcel'), A. D.: On boundary conditions for multi-dimensional diffusion processes. \emph{Theor. Probability Appl.} \textbf{4} (1959), 164--177. MR-0121855

\bibitem{xie} Xie, L. and Zhang, X.: Heat kernel estimates for critical fractional diffusion operators. \emph{Studia Math.} \textbf{224} (2014), no. 3, 221--263. MR-3294616

\bibitem{zaitseva} Zaitseva, L. L.: On a multidimensional Brownian motion with partly reflecting membrane on a hyperplane. Proceedings of the Third Ukrainian-Scandinavian Conference in Probability Theory and Mathematical Statistics (Kiev, 1999). \emph{Theory Stoch. Process.} \textbf{5} (1999), no. 3-4, 258--262. MR-2021382

\end{thebibliography}

% add below the content of your .bbl file produced by bibtex.

%%%%%%%%%%%%%%%%%%%%%%%%%%%%%%%%%%%%%%%%%%%%%%%%%%%%%%%%%%%%%%%%%%%
%%                                                               %%
%% You may add acknowledgments (optional).                       %%
%%                                                               %%
%%%%%%%%%%%%%%%%%%%%%%%%%%%%%%%%%%%%%%%%%%%%%%%%%%%%%%%%%%%%%%%%%%%
%
%\ACKNO{We are grateful to Martin Hairer who provided a nice \texttt{MR} macro and to S\'ebastien Gou\"ezel for his useful comments on the internals of the class %file.}

%%%%%%%%%%%%%%%%%%%%%%%%%%%%%%%%%%%%%%%%%%%%%%%%%%%%%%%%%%%%%%%%%%%
%%                                                               %%
%% You have reached the end of your document.                    %%
%%                                                               %%
%%%%%%%%%%%%%%%%%%%%%%%%%%%%%%%%%%%%%%%%%%%%%%%%%%%%%%%%%%%%%%%%%%%

\end{document}